\numberwithin{equation}{section}
\def\Ext{\mbox{\rm Ext}\,} \def\Hom{\mbox{\rm Hom}} \def\dim{\mbox{\rm dim}\,} 
\def\lr#1{\langle #1\rangle} \def\fin{\hfill$\square$}   \def\mod{\mbox{\rm \textbf{mod}}\,}
   \def\im{\mbox{\rm Im}\,} 
\def\End{\mbox{\rm End}\,}
 \def\gl.{\mbox{\rm gl.}\,}\def\D{\mbox{\rm D}\,}
\def\ind{\mbox{\rm ind}\,}\def\P{\mathcal {P}}\def\R{\mathcal {R}}\def\I{\mathcal {I}}
\def\Aut{\mbox{\rm Aut}\,}
\theoremstyle{plain} 
\newtheorem{theorem}{\bf Theorem}[section]
\newtheorem{lemma}[theorem]{\bf Lemma}
\newtheorem{corollary}[theorem]{\bf Corollary}
\newtheorem{proposition}[theorem]{\bf Proposition}
\theoremstyle{definition}
\newtheorem{definition}[theorem]{\bf Definition}
\newtheorem{remark}[theorem]{\bf Remark}
\newtheorem{example}[theorem]{\bf Example}
\newcommand{\bt}{\begin{theorem}}
\newcommand{\et}{\end{theorem}}
\newcommand{\bl}{\begin{lemma}}
\newcommand{\el}{\end{lemma}}
\newcommand{\bd}{\begin{definition}}
\newcommand{\ed}{\end{definition}}
\newcommand{\bc}{\begin{corollary}}
\newcommand{\ec}{\end{corollary}}
\newcommand{\bp}{\begin{proof}}
\newcommand{\ep}{\end{proof}}
\newcommand{\bx}{\begin{example}}
\newcommand{\ex}{\end{example}}
\newcommand{\br}{\begin{remark}}
\newcommand{\er}{\end{remark}}
\newcommand{\be}{\begin{equation}}
\newcommand{\ee}{\end{equation}}
\newcommand{\ba}{\begin{align}}
\newcommand{\ea}{\end{align}}
\newcommand{\bn}{\begin{enumerate}}
\newcommand{\en}{\end{enumerate}}
\newcommand{\bcs}{\begin{cases}}
\newcommand{\ecs}{\end{cases}}
\renewcommand{\section}{\@startsection{section}{1}{0mm}
  {-\baselineskip}{0.5\baselineskip}{\bf\leftline}}
\begin{document}

\title[On derived Hall numbers for tame quivers]{On derived Hall numbers for tame quivers}
\author{Shiquan Ruan and Haicheng Zhang}
\address{Yau Mathematical Sciences Center, Tsinghua University,
Beijing 100084,  China.} \email{sqruan@math.tsinghua.edu.cn,\;\; zhanghai14@mails.tsinghua.edu.cn}

\subjclass[2010]{16G20, 17B20, 17B37.}
\keywords{Derived Hall algebra; derived Hall number; generic function; tame quiver.}

\begin{abstract}
In the present paper we study the derived Hall algebra for the bounded derived category of the nilpotent representations of a tame quiver over a finite field. We show that for any three given objects in the bounded derived category, the associated derived Hall numbers are given by a rational function in the cardinalities of ground fields.
\end{abstract}

\maketitle

\section{Introduction}

In 1990, Ringel \cite{R90a} introduced the Hall algebra $\mathcal {H}(A)$ of a finite dimensional algebra $A$ over a finite field. By definition, the Hall algebra $\mathcal {H}(A)$ is a free abelian group with basis the isoclasses (isomorphism classes) of finite dimensional $A$-modules, and the structure constants are given by the so-called Hall numbers, which count the number of certain submodules. Ringel \cite{R90,R90a} proved that if $A$ is representation finite and hereditary, then the twisted Hall algebra $\mathcal {H}_{v}(A)$, called the Ringel--Hall algebra, is isomorphic to the positive part of the corresponding quantized enveloping algebra. Later on, Green \cite{Gr95} introduced a bialgebra structure on $\mathcal {H}_{v}(A)$,
and he showed that the composition subalgebra of $\mathcal {H}_{v}(A)$ generated by simple $A$-modules
provides a realization of the positive part of the corresponding quantized enveloping algebra.

In case that $A$ is representation finite and hereditary, Ringel \cite{R90} showed that the Hall numbers of $A$ are actually integer polynomials in the cardinalities of finite fields. The proof is based on
the directedness of the Auslander--Reiten quiver of the module category of $A$.
These polynomials are called Hall polynomials as in the classical case; see \cite{Macdonald}.
Then one can define the generic Hall algebra $H_{\boldsymbol v}(A)$ over the Laurent polynomial ring $\mathbb{Z}[\boldsymbol v,\boldsymbol v^{-1}]$ and its degeneration $H_1(A)$ at $\boldsymbol v=1$. It was shown by Ringel \cite{R90} that $H_1(A)\otimes\mathbb{C}$ is isomorphic
to the positive part of the universal enveloping algebra of the complex semisimple Lie algebra associated with $A$.
Since then, much subsequent work was devoted to the study of Hall polynomials for various classes
of algebras. In \cite{R91b}, Ringel conjectured that Hall polynomials exist for all representation finite algebras. This conjecture has been proved only for some special algebras, see for example \cite{Fu,GuoP,Nasr,Peng,R90}. By {Ringel \cite{R93}} and Guo \cite{Guo}, Hall polynomials exist in the category of finite dimensional nilpotent representations for a cyclic quiver. Some Hall polynomials for representations of
the Kronecker quiver has also been calculated in \cite{Zhang,Szato}.
More generally, Hubery \cite{Hub} proved that Hall polynomials exist for all tame (affine) quivers with respect
to the decomposition classes of Bongartz and Dudek \cite{BD}.
Recently, Deng and Ruan \cite{DR} have generalized Hubery's result to a more general setting.
They proved that Hall polynomials exist for domestic weighted projective lines with respect to the decomposition sequences.
As an application, they obtained the existence of Hall polynomials for tame quivers.
We remark that the existence of Hall polynomials has gained importance recently by the relevance of quiver Grassmannians with cluster algebras, see \cite{CR}.

In order to realize the entire quantized enveloping algebra via Hall algebra approach, one turns to consider the Hall algebras of triangulated categories (c.f. \cite{Kapranov,Toen,XiaoXu}). T\"oen \cite{Toen} defined a derived Hall algebra $\mathcal {D}\mathcal {H}(\mathcal {A})$ for a differential graded category $\mathcal {A}$ satisfying some finiteness conditions, where the structure constants are given by the so-called derived Hall numbers.
Later on, Xiao and Xu \cite{XiaoXu} investigated the derived Hall algebra for an arbitrary triangulated category under certain finiteness conditions, which includes the bounded derived category $D^b(A)$ of a finite dimensional hereditary algebra $A$ over a finite field.

A natural question is whether the derived Hall numbers also have a generic phenomenon like Hall polynomials for Hall numbers. The aim of this paper is to give a positive answer to this question for Dynkin and tame quivers. It seems that in the Ringel--Hall algebra case, the associativity formula alone can not be applied to solve the problem since the middle term remains unchanged, while it is successful to use Green's Formula to reduce the dimension vector of the middle term and then make induction.
But for the derived Hall algebra case, Green's Formula is not available. This makes the problem much more difficult in this context. Our main strategy is to rotate triangles to change the ``middle terms", and then to use the associativity formula to proceed induction.

The paper is organized as follows. In Section 2 we give a brief introduction to the representation categories of tame quivers, and recall the definitions of derived Hall numbers and derived Hall algebras. In Section 3 we define the generic functions of derived Hall numbers and present our Main Theorem which states that the generic functions exist for Dynkin and tame quivers, we also give some preparatory results which are needed for the proof of the Main Theorem in Sections 4 and 5.

\section{Preliminaries}

\subsection{}

Throughout the paper, $k$ denotes a finite field with $q$ elements. Let $Q$ be a finite quiver and let $A=kQ$ be the path algebra of $Q$ over $k$. By $\mod A$ and $D^b(A)$ we denote respectively the category of finite dimensional (left) $A$-modules and its bounded derived category. The suspension functor of $D^b(A)$ is denoted by $[1]$, and we write $\Hom(-,-)$ for $\Hom_{D^b(A)}(-,-)$ to simply the notation.

For a cyclic quiver $Q$ with $m$ vertices, denote by $\mathcal {J}_m$ the category of finite dimensional nilpotent representations of $Q$ over $k$. It is well-known that the set of isoclasses of objects in $\mathcal {J}_m$ is in bijection with $m$-tuples of partitions; see for example \cite{Sch}. In particular, the classification of nilpotent representations is independent of the ground field $k$.

For a tame quiver $Q$, it is well
known from \cite{Dlab} that the subcategory $\ind A$ consisting of indecomposable $A$-modules admits
a disjoint decomposition
$\ind A = \mathcal {P}\cup\mathcal {R}\cup\mathcal {I}$,
where $\mathcal {P}$ (resp., $\mathcal {R}$, $\mathcal {I}$) denotes the subcategory of indecomposable preprojective
(resp., regular, preinjective) $A$-modules. Moreover, they satisfy
$$\;\Hom_A(\I,\R) = 0,\qquad\Hom_A(\I,\P) = 0,\qquad\Hom_A(\R,\P) = 0,$$
$$\;\Ext^1_A(\R,\I) = 0,\qquad\;\;\Ext^1_A(\P,\I) = 0,\qquad\;\Ext^1_A(\P,\R) = 0.$$

{The Auslander--Reiten quiver of $\R$ consists of finitely many non-homogeneous tubes and infinitely many homogeneous tubes. More precisely,
each non-homogeneous tube of rank $m$ is equivalent to the category of nilpotent representations
of a cyclic quiver with $m$ vertices over $k$, while for each homogeneous tube $\mathcal {J}$ with the quasi-simple top $M$,
$k_M:=\End_A(M)$ is a finite field
extension of $k$ and $\mathcal {J}$ is equivalent to the
category of nilpotent representations of the Jordan quiver over $k_M$.}

\subsection{}

For any three objects $X,Y,L$ in $D^b(A)$, denote by $$W_{XY}^L:=\{(f,g,h)~|~Y\xrightarrow{f}L\xrightarrow{g}X\xrightarrow{h}Y[1]~~\mbox{is~a~triangle~in}~~D^b(A)\}.$$
Consider the action of the group $\Aut Y$ on the set $W_{XY}^L$ defined by
$$\xymatrix{Y\ar[r]^-f\ar[d]^-\eta&L\ar[r]^-g\ar@{=}[d]&X\ar[r]^-h\ar@{=}[d]&Y[1]\ar[d]^-{\eta[1]}\\
Y\ar[r]^-{f'}&L\ar[r]^-{g}&X\ar[r]^-{h'}&Y[1].}$$
Denote the set of orbits by $(W_{XY}^L)_Y^*$. Dually, we consider the action of the group $\Aut X$ on the set $W_{XY}^L$ and obtain the orbit set $(W_{XY}^L)_X^*$.

Since the actions above are not free, in general, $$\frac{|(W_{XY}^L)_Y^*|}{|\Aut X|}\neq\frac{|(W_{XY}^L)_X^*|}{|\Aut Y|}.$$
However, by \cite{XiaoXu} we know that
$$\frac{|(W_{XY}^L)_Y^*|}{|\Aut X|}\cdot\frac{\{L,X\}}{\{X,X\}}=\frac{|(W_{XY}^L)_X^*|}{|\Aut Y|}\cdot\frac{\{Y,L\}}{\{Y,Y\}}=:F_{X,Y}^L,$$
where and elsewhere we denote by $|S|$ the cardinality of a finite set $S$, and for any $M,N\in D^b(A)$, $\{M,N\}:=\prod\limits_{i>0}|\Hom(M[i],N)|^{(-1)^i}$. The number $F_{X,Y}^L$ is called the \emph{derived Hall number} associated to the objects $X,Y,L$ in $D^b(A)$. Actually, it is easy to see that
\begin{equation*}|(W_{XY}^L)_Y^*|=|\Hom(L,X)_{Y[1]}|~~~~\mbox{and}~~~~ |(W_{XY}^L)_X^*|=|\Hom(Y,L)_{X}|,\end{equation*} where $\Hom(M,N)_Z$ denotes the subset of $\Hom(M,N)$ consisting of morphisms $M\to N$ whose cone is isomorphic to $Z$. Namely, we have the following identity
$$F_{X,Y}^L=\frac{|\Hom(L,X)_{Y[1]}|}{|\Aut X|}\cdot\frac{\{L,X\}}{\{X,X\}}=
\frac{|\Hom(Y,L)_{X}|}{|\Aut Y|}\cdot\frac{\{Y,L\}}{\{Y,Y\}}.$$

The \emph{derived Hall algebra} $\mathcal {D}\mathcal {H}(A)$ of $A$ is the $\mathbb{Q}$-space spanned by the isoclasses $[X]$ of objects in $D^b(A)$, and the multiplication is defined by
$$[X]\cdot[Y]=\sum\limits_{[L]}F_{X,Y}^L[L].$$

By \cite[Theorem 3.6]{XiaoXu}, $\mathcal {D}\mathcal {H}(A)$ is an associative and unital algebra.
The associativity of the derived Hall algebra states that for a quadruple $\{X,Y,Z;L\}$ of objects in $\mathcal {D}\mathcal {H}(A)$,
$$\sum\limits_{[W]}F_{X,Y}^WF_{W,Z}^L=\sum\limits_{[U]}F_{X,U}^LF_{Y,Z}^U.$$

Now, we give some simple calculations on the derived Hall numbers.

\begin{lemma}\label{same}
If there exists a hereditary abelian subcategory $\mathcal{B}$ of $D^b(A)$ which is derived equivalent to $\mod A$, such that $X,Y,L\in\mathcal{B}$. Then the derived Hall number $F_{X,Y}^L$ coincides with the Hall number with respect to the triple $\{X,Y,L\}$ of objects in $\mathcal{B}$.
\end{lemma}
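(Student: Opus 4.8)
The plan is to unwind the definition of the derived Hall number $F_{X,Y}^L$ and compare it with the classical Hall number $F_{X,Y}^{L,\,\mathcal B}$ computed inside the abelian category $\mathcal B$. Recall from the preceding discussion that
$$F_{X,Y}^L=\frac{|\Hom(Y,L)_X|}{|\Aut Y|}\cdot\frac{\{Y,L\}}{\{Y,Y\}},$$
where $\Hom(Y,L)_X$ is the set of morphisms $Y\to L$ in $D^b(A)$ whose cone is isomorphic to $X$. The first step is to show that, since $X,Y,L$ all lie in the hereditary abelian subcategory $\mathcal B$, every triangle $Y\to L\to X\to Y[1]$ realizing such a morphism is (up to isomorphism) the triangle attached to a short exact sequence $0\to Y\to L\to X\to 0$ in $\mathcal B$: indeed, a morphism $f\colon Y\to L$ in $\mathcal B$ has cone concentrated in degrees $0$ and $-1$, with $H^0(\Cone f)=\Coker f$ and $H^{-1}(\Cone f)=\Ker f$, and $\Cone f\cong X\in\mathcal B$ forces $\Ker f=0$ and $\Coker f\cong X$. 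Conversely each short exact sequence in $\mathcal B$ with the prescribed subobject and quotient gives such a morphism. Hence $|\Hom(Y,L)_X|$ equals the number of monomorphisms $Y\hookrightarrow L$ with cokernel isomorphic to $X$, which is the quantity appearing in Riedtmann's formula for the Hall number; dividing by $|\Aut Y|$ then yields exactly $F_{X,Y}^{L,\,\mathcal B}\cdot\frac{|\Ext^1_{\mathcal B}(X,Y)_L|}{|\Hom_{\mathcal B}(X,Y)|}$-type expressions, so I would phrase this step as: $\frac{|\Hom(Y,L)_X|}{|\Aut Y|}$ agrees with the corresponding submodule-counting ratio in $\mathcal B$.

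The second step is to identify the correction factor $\frac{\{Y,L\}}{\{Y,Y\}}$. Since $\mathcal B$ is hereditary, for objects $M,N\in\mathcal B$ one has $\Hom(M[i],N)=0$ for all $i\ne 0,-1$, and $\Hom(M[1],N)\cong\Ext^1_{\mathcal B}(M,N)$. Therefore
$$\{M,N\}=\prod_{i>0}|\Hom(M[i],N)|^{(-1)^i}=|\Hom(M[1],N)|^{-1}=|\Ext^1_{\mathcal B}(M,N)|^{-1}.$$
Consequently $\frac{\{Y,L\}}{\{Y,Y\}}=\frac{|\Ext^1_{\mathcal B}(Y,Y)|}{|\Ext^1_{\mathcal B}(Y,L)|}$, and combining with the first step reproduces precisely the normalization appearing in Ringel's definition of the Hall number $F_{X,Y}^L$ in the hereditary abelian category $\mathcal B$. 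Since Hall numbers in $\mathcal B$ depend only on the abelian category $\mathcal B$ and not on any embedding, and since $\mathcal B$ is equivalent as an exact category to $\mod A$ (the derived equivalence restricting to an exact equivalence on $\mathcal B$, because both are hereditary hearts), one concludes $F_{X,Y}^L=F_{X,Y}^{L,\,\Mod A}$, as desired.

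I expect the main obstacle to be the first step: carefully verifying that the cone of a morphism between objects of $\mathcal B$ cannot ``escape'' $\mathcal B$ in an unexpected way, i.e.\ that $\Cone f\cong X\in\mathcal B$ really does force $f$ to be a monomorphism in $\mathcal B$ with cokernel $X$, and that the $\Aut$-orbit bookkeeping matches Riedtmann's formula on the nose. This is where the hereditary hypothesis is essential — it guarantees $\Cone f$ has cohomology concentrated in two adjacent degrees, so that membership of the cone in the heart $\mathcal B$ is equivalent to vanishing of one of those cohomologies. Once this is in place, the remaining identifications are the routine cohomological computations indicated above, and the hereditary condition also makes the Euler-form correction factors collapse to a single $\Ext^1$ term. \fin
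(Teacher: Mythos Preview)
Your overall strategy --- identify $\Hom(Y,L)_X$ with monomorphisms $Y\hookrightarrow L$ in $\mathcal B$ having cokernel $X$, then compute the correction factor $\{Y,L\}/\{Y,Y\}$ --- is the right one, and is close in spirit to the paper's proof. However, your second step contains a shift error that spoils the computation. For $M,N\in\mathcal B$ one has $\Hom(M[i],N)=\Hom(M,N[-i])$, so $\Hom(M[1],N)\cong\Ext^{-1}_{\mathcal B}(M,N)=0$, \emph{not} $\Ext^1_{\mathcal B}(M,N)$; it is $\Hom(M[-1],N)=\Hom(M,N[1])$ that equals $\Ext^1$. Since the product defining $\{M,N\}$ runs over $i>0$ only, every factor is the cardinality of the zero space, i.e.\ $1$, and hence $\{Y,L\}=\{Y,Y\}=1$. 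Thus the correction factor is trivially $1$, and your first step by itself already yields $F_{X,Y}^L=|\Hom(Y,L)_X|/|\Aut Y|$, which \emph{is} the Hall number in $\mathcal B$ (the count of subobjects of $L$ isomorphic to $Y$ with quotient isomorphic to $X$). Your detour through Riedtmann's formula and the claimed factor $|\Ext^1(Y,Y)|/|\Ext^1(Y,L)|$ is therefore both unnecessary and incorrect: that factor is not present in Ringel's definition and there is nothing for it to cancel against.

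For comparison, the paper works from the other expression $F_{X,Y}^L=\dfrac{|(W_{XY}^L)_Y^*|}{|\Aut X|}\cdot\dfrac{\{L,X\}}{\{X,X\}}$: it notes $\{L,X\}/\{X,X\}=1$ for the same reason as above, proves that the $\Aut Y$-action on $W_{XY}^L$ is \emph{free} (using $\Hom(Y,X[-1])=0$) so that $|(W_{XY}^L)_Y^*|=|W_{XY}^L|/|\Aut Y|$, and then invokes a bijection between $W_{XY}^L$ and the set of short exact sequences $0\to Y\to L\to X\to 0$ in $\mathcal B$. Your route via $\Hom(Y,L)_X=(W_{XY}^L)_X^*$ sidesteps the freeness argument because the $\Aut X$-orbits are already built in, and your cone-in-heart argument (that $\Cone f\cong X\in\mathcal B$ forces $\Ker f=0$ and $\Coker f\cong X$) is correct and does the job. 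Once you fix the $\{\,\cdot\,,\cdot\,\}$ computation, your argument goes through cleanly.
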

\bp
Since $X,L\in\mathcal{B}$, we obtain that $\{L,X\}=\{X,X\}=0$. We claim that the action of $\Aut Y$ on $W_{XY}^L$ is free. Indeed, for any $\eta\in\Aut Y$ and $(f,g,h)\in W_{XY}^L$, assume that $f=f\circ\eta$, that is, $f\circ(1-\eta)=0$, then there is a morphism $x\in\Hom(Y,X[-1])$ such that $1-\eta=-h[-1]\circ x$. Since $\Hom(Y,X[-1])=0$, we conclude that $x=0$ and then $\eta=1$. Hence, $|(W_{XY}^L)_Y^*|=\frac{|W_{XY}^L|}{|\Aut Y|}$. Thus,
$$F_{X,Y}^L=\frac{|W_{XY}^L|}{|\Aut X|\cdot|\Aut Y|}.$$ By \cite[Lemma 2.12]{Xiao2}, we know that there is a bijection between $W_{XY}^L$ and $\{(f,g)~|~0\to Y\xrightarrow{f}L\xrightarrow{g}X\to 0~\mbox{is a short exact sequence in}~\mathcal{B}\}$. Therefore, we complete the proof.
\ep

\begin{lemma}\label{pinfan}
For any objects $X,Y\in D^b(A)$,
$$F_{X,Y}^{X\oplus Y}=\{Y,X\}\cdot\frac{|\Aut\left(X\oplus Y\right)|}{|\Hom\left(X,Y\right)|\cdot|\Aut X|\cdot|\Aut Y|}.$$ In particular, if $X$ and $Y$ have no common direct summands, then
$$F_{X,Y}^{X\oplus Y}=\{Y,X\}\cdot|\Hom(Y,X)|.$$
\end{lemma}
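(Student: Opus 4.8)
The plan is to compute $F_{X,Y}^{X\oplus Y}$ directly from the second expression in the defining identity, namely
$$F_{X,Y}^{X\oplus Y}=\frac{|\Hom(Y,X\oplus Y)_{X}|}{|\Aut Y|}\cdot\frac{\{Y,X\oplus Y\}}{\{Y,Y\}}.$$
First I would simplify the multiplicativity factor: since $\{Y,-\}$ is multiplicative on direct sums, $\{Y,X\oplus Y\}=\{Y,X\}\cdot\{Y,Y\}$, so the right-hand factor collapses to $\{Y,X\}$. The remaining task is therefore to count the morphisms $\varphi\colon Y\to X\oplus Y$ whose cone is isomorphic to $X$, and to divide by $|\Aut Y|$.

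Next I would identify which $\varphi=(h,g)^{t}\colon Y\to X\oplus Y$ have cone $X$. The key observation is that $\Cone(\varphi)\cong X$ forces the component $g\colon Y\to Y$ to be an isomorphism: if $g$ were not invertible, the cone would be strictly larger (one can see this by forming the octahedron, or by noting that a morphism with a genuine self-summand in the target contributes that summand to the cone up to the relevant quotient). Conversely, whenever $g\in\Aut Y$, the morphism $(h,g)^{t}$ is a split monomorphism onto $Y$ followed by an automorphism of $X\oplus Y$, hence its cone is $X$. Thus $\Hom(Y,X\oplus Y)_{X}=\{(h,g)\mid h\in\Hom(Y,X),\ g\in\Aut Y\}$, which has cardinality $|\Hom(Y,X)|\cdot|\Aut Y|$. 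Dividing by $|\Aut Y|$ and multiplying by $\{Y,X\}$ gives
$$F_{X,Y}^{X\oplus Y}=\{Y,X\}\cdot|\Hom(Y,X)|.$$

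To obtain the stated formula with the full automorphism group of $X\oplus Y$, I would then apply the general identity $|\Hom(Y,X)|\cdot|\Hom(X,Y)|=\dfrac{|\Aut(X\oplus Y)|}{|\Aut X|\cdot|\Aut Y|}$ when $X$ and $Y$ have no common direct summands, and more generally rewrite $|\Hom(Y,X)|$ using the block-matrix description of $\End(X\oplus Y)$: the endomorphism ring of $X\oplus Y$ has a filtration whose layers are $\End X$, $\End Y$, $\Hom(X,Y)$, $\Hom(Y,X)$, so $|\End(X\oplus Y)|=|\End X|\cdot|\End Y|\cdot|\Hom(X,Y)|\cdot|\Hom(Y,X)|$; passing to unit groups is not quite multiplicative in general, but when there are no common summands $\Aut(X\oplus Y)$ consists exactly of the block matrices with invertible diagonal, giving the clean count. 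Substituting $|\Hom(Y,X)|=\dfrac{|\Aut(X\oplus Y)|}{|\Hom(X,Y)|\cdot|\Aut X|\cdot|\Aut Y|}$ into the boxed identity yields the general formula, and the no-common-summand case is the special case just discussed.

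The main obstacle I anticipate is the cone computation in the second paragraph: verifying cleanly, in the triangulated category $D^b(A)$, that $\Cone(h,g)^{t}\cong X$ precisely when $g\in\Aut Y$. For the ``if'' direction this is an easy octahedral/change-of-basis argument, but for the ``only if'' direction one must rule out, using the available $\Hom$- and $\Ext$-vanishing, that a non-invertible $g$ could still produce a cone abstractly isomorphic to $X$; the cleanest route is probably to note that $(h,g)^{t}$ factors through the split mono $Y\hookrightarrow X\oplus Y$ only up to the image of $g$, then analyze the induced long exact sequences in $\Hom(-,X)$ and $\Hom(-,Y)$ to pin down $\End$ of the cone and force it to differ from $\End X$ unless $g$ is invertible.
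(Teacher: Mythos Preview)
Your argument has the right overall shape but contains a genuine error in the key step. The claim that $\Cone\bigl((h,g)^t\bigr)\cong X$ if and only if $g\in\Aut Y$ is \emph{false} when $X$ and $Y$ share indecomposable summands. Take $X=Y$: the morphism $(1_Y,0)^t\colon Y\to Y\oplus Y$ is a split monomorphism with cone $Y=X$, yet its second component is $g=0$. More concretely, for $X=Y=S$ a simple module with $\End(S)=k$ and $\Ext^1(S,S)=0$, your count gives $|\Hom(S,S)|\cdot|\Aut S|=q(q-1)$, whereas the correct value of $|\Hom(S,S\oplus S)_S|$ is $q^2-1$ (every nonzero map is a split mono with cokernel $S$). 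Thus your intermediate formula $F_{X,Y}^{X\oplus Y}=\{Y,X\}\cdot|\Hom(Y,X)|$ is only valid in the no-common-summands case, and your attempt to recover the general formula from it by substituting $|\Hom(Y,X)|=\dfrac{|\Aut(X\oplus Y)|}{|\Hom(X,Y)|\,|\Aut X|\,|\Aut Y|}$ is circular: that identity, as you yourself note, also requires no common summands. So the logic runs in the wrong direction---you have proved the special case and cannot bootstrap to the general one.

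The paper proceeds oppositely: it proves the general formula first and then specializes. The point is that \emph{every} triangle $Y\to X\oplus Y\to X\to Y[1]$ is split (this is the content of \cite[Lemma~3]{PX2000}, used later in the paper), so the connecting morphism is zero and the $\Aut Y$-action on $W_{XY}^{X\oplus Y}$ is free. One is then reduced to counting split short exact sequences $0\to Y\to X\oplus Y\to X\to 0$, which by an orbit--stabilizer argument (this is \cite[Lemma~8]{Hubery}) gives $|W_{XY}^{X\oplus Y}|=\dfrac{|\Aut(X\oplus Y)|}{|\Hom(X,Y)|}$. Dividing by $|\Aut X|\cdot|\Aut Y|$ and multiplying by $\{Y,X\}$ yields the general formula directly. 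Your approach can be repaired along the same lines: the correct characterization is that $\Cone(\varphi)\cong X$ if and only if $\varphi$ is a split monomorphism, and the number of such $\varphi$ equals $\dfrac{|\Aut(X\oplus Y)|}{|\Aut X|\cdot|\Hom(X,Y)|}$ by letting $\Aut(X\oplus Y)$ act on them and computing the stabilizer of the standard inclusion.
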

\bp
By definition, $$F_{X,Y}^{X\oplus Y}=\frac{|(W_{XY}^{X\oplus Y})_Y^*|}{|\Aut X|}\cdot\frac{\{X\oplus Y,X\}}{\{X,X\}}=\{Y,X\}\cdot\frac{|(W_{XY}^{X\oplus Y})_Y^*|}{|\Aut X|}.$$
We claim that the action of $\Aut Y$ on $W_{XY}^{X\oplus Y}$ is free. Indeed, for any $\eta\in\Aut Y$ and $\left({{f_1}\choose{f_2}},(g_1,g_2),0\right)\in W_{XY}^{X\oplus Y}$, if ${{f_1}\choose{f_2}}={{f_1}\choose{f_2}}\eta$, that is, ${{f_1}\choose{f_2}}(1-\eta)=0$, then $1-\eta$ factors through the zero morphism $X[-1]\xrightarrow{0} Y$, so $\eta=1$. By \cite[Lemma 8]{Hubery}, we know that $|W_{XY}^{X\oplus Y}|=\frac{|\Aut\left(X\oplus Y\right)|}{|\Hom\left(X,Y\right)|}$. Hence,
$$|(W_{XY}^{X\oplus Y})_Y^*|=\frac{|\Aut\left(X\oplus Y\right)|}{|\Hom\left(X,Y\right)|\cdot|\Aut Y|}.$$
Thus, $$F_{X,Y}^{X\oplus Y}=\{Y,X\}\cdot\frac{|\Aut\left(X\oplus Y\right)|}{|\Hom\left(X,Y\right)|\cdot|\Aut X|\cdot|\Aut Y|}.$$
In particular, if $X$ and $Y$  have no direct summands in common, then $|\Aut(X\oplus Y)|=|\Aut X|\cdot|\Aut Y|\cdot|\Hom(X,Y)|\cdot|\Hom(Y,X)|$. Therefore, we complete the proof.
\ep

\begin{lemma}\label{xuanzhuan}
Let $X,Y,L\in D^b(A)$. Then

$(1)$~$F_{L,X[-1]}^Y=\frac{|\Aut Y|\cdot|\{Y,Y\}|}{|\Aut L|\cdot|\{L,L\}|}\cdot F_{X,Y}^L$;~~~~\quad\quad
$(2)$~$F_{Y[1],L}^{X}=\frac{|\Aut X|\cdot|\{X,X\}|}{|\Aut L|\cdot|\{L,L\}|}\cdot F_{X,Y}^L$.
\end{lemma}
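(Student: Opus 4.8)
The plan is to read both identities straight off the two equivalent closed formulas for the derived Hall number recorded just above, namely, for arbitrary $X,Y,L\in D^b(A)$,
$$F_{X,Y}^L=\frac{|\Hom(L,X)_{Y[1]}|}{|\Aut X|}\cdot\frac{\{L,X\}}{\{X,X\}}=\frac{|\Hom(Y,L)_{X}|}{|\Aut Y|}\cdot\frac{\{Y,L\}}{\{Y,Y\}}.$$
The point behind the lemma is that these formulas do not see the difference between a triangle and its rotations: a triangle $Y\to L\to X\to Y[1]$ rotates to $X[-1]\to Y\to L\to X$ and to $L\to X\to Y[1]\to L[1]$, so the set $\Hom(Y,L)_{X}$ of morphisms $Y\to L$ with cone isomorphic to $X$ is exactly the one that governs $F_{L,X[-1]}^Y$, and likewise $\Hom(L,X)_{Y[1]}$ is exactly the one that governs $F_{Y[1],L}^X$. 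Thus passing to a rotated triangle only replaces the normalising factors.

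For part $(1)$, I substitute $(X,Y,L)\rightsquigarrow(L,X[-1],Y)$ into the first of the two formulas. Since $(X[-1])[1]=X$, $|\Aut(X[-1])|=|\Aut X|$ and $\{X[-1],X[-1]\}=\{X,X\}$, this yields
$$F_{L,X[-1]}^Y=\frac{|\Hom(Y,L)_{X}|}{|\Aut L|}\cdot\frac{\{Y,L\}}{\{L,L\}}.$$
Comparing with the second formula, which reads $F_{X,Y}^L=\frac{|\Hom(Y,L)_{X}|}{|\Aut Y|}\cdot\frac{\{Y,L\}}{\{Y,Y\}}$, and dividing, the common factor $|\Hom(Y,L)_{X}|\cdot\{Y,L\}$ cancels, leaving $F_{L,X[-1]}^Y=\frac{|\Aut Y|\cdot\{Y,Y\}}{|\Aut L|\cdot\{L,L\}}\cdot F_{X,Y}^L$; this is the asserted identity, the bars on $\{Y,Y\}$ and $\{L,L\}$ being cosmetic since these quantities are positive, and the case $\Hom(Y,L)_{X}=\varnothing$ making both sides zero.

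For part $(2)$, I substitute $(X,Y,L)\rightsquigarrow(Y[1],L,X)$ into the second formula, obtaining $F_{Y[1],L}^X=\frac{|\Hom(L,X)_{Y[1]}|}{|\Aut L|}\cdot\frac{\{L,X\}}{\{L,L\}}$, and compare with the first formula $F_{X,Y}^L=\frac{|\Hom(L,X)_{Y[1]}|}{|\Aut X|}\cdot\frac{\{L,X\}}{\{X,X\}}$; cancelling $|\Hom(L,X)_{Y[1]}|\cdot\{L,X\}$ and rearranging gives $F_{Y[1],L}^X=\frac{|\Aut X|\cdot\{X,X\}}{|\Aut L|\cdot\{L,L\}}\cdot F_{X,Y}^L$.

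There is essentially no obstacle here: the whole argument is bookkeeping with the two closed formulas, whose validity for all objects of $D^b(A)$ (including the degenerate cases where a $\Hom$-set with prescribed cone is empty) is exactly what makes the substitutions legitimate. If one wanted a proof not invoking the second closed formula, one could instead build the rotation bijection $W_{XY}^L\xrightarrow{\sim}W_{L,X[-1]}^Y$, $(f,g,h)\mapsto(-h[-1],f,g)$, check that it carries the $\Aut X$-action on the third term to the $\Aut(X[-1])$-action on the first term under $\xi\mapsto\xi[-1]$, and then unwind the orbit-counting definition of $F$ — but that reproduces the same computation.
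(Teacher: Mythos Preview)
Your proof is correct and takes essentially the same approach as the paper: the paper works with the orbit-set form $|(W_{XY}^L)_X^*|$ and uses the rotation bijection $(W_{XY}^L)_X^*\cong(W_{L,X[-1]}^Y)_{X[-1]}^*$, while you use the equivalent $|\Hom(Y,L)_X|$ notation already established in the paper, but the computation is identical.
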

\bp
$(1)$~By definition, \begin{equation*}\begin{split}F_{X,Y}^L&=\frac{|(W_{XY}^L)_X^*|}{|\Aut Y|}\cdot\frac{\{Y,L\}}{\{Y,Y\}}\\&=\frac{|(W_{L,X[-1]}^Y)_{X[-1]}^*|}{|\Aut L|}\cdot\frac{\{Y,L\}}{\{L,L\}}\cdot\frac{|\Aut L|\cdot\{L,L\}}{|\Aut Y|\cdot\{Y,Y\}}\\&=
F_{L,X[-1]}^Y\cdot\frac{|\Aut L|\cdot\{L,L\}}{|\Aut Y|\cdot\{Y,Y\}}.
\end{split}\end{equation*}

$(2)$~Similar to (1).
\ep

The following lemma is frequently-used in a triangulated category.

\begin{lemma}\rm{(\cite[Lemma 2.5]{Xiao2})}\label{youyong}
Let $\xymatrix{N \ar[r]^-{{f_1}\choose{f_2}}& L_1\oplus L_2\ar[r]^-{(g_1,g_2)}&M\ar[r]^-{h}&N[1]}$ be a triangle in a triangulated category. If $f_2=0$, then this triangle is isomorphic to the triangle
$$\xymatrix{N \ar[r]^-{{f_1}\choose{0}}& L_1\oplus L_2\ar[r]^-{\left({\begin{smallmatrix}g'&0\\0&1\end{smallmatrix}}\right)}&M'\oplus L_2\ar[r]^-{(h',0)}&N[1],}$$ where $\xymatrix{N\ar[r]^-{f_1}&L_1\ar[r]^-{g'}&M'\ar[r]^-{h'}&N[1]}$ is also a triangle. A similar result also holds whenever $g_2=0$.
\end{lemma}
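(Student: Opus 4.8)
The plan is to build the right-hand triangle by hand and then recognise it as the given one, exploiting the fact that a distinguished triangle is determined up to isomorphism by any one of its three morphisms. First I would complete the morphism $f_1\colon N\to L_1$ to a distinguished triangle
\[
N\xrightarrow{f_1}L_1\xrightarrow{g'}M'\xrightarrow{h'}N[1],
\]
which provides the object $M'$ and the maps $g',h'$ occurring in the statement; the only choice made is that of a cone of $f_1$.

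Next I would take the direct sum of this triangle with the split triangle $0\to L_2\xrightarrow{\Id}L_2\to 0$. Since a finite direct sum of distinguished triangles is distinguished, this produces the distinguished triangle
\[
N\xrightarrow{\binom{f_1}{0}}L_1\oplus L_2\xrightarrow{\left({\begin{smallmatrix}g'&0\\0&1\end{smallmatrix}}\right)}M'\oplus L_2\xrightarrow{(h',0)}N[1],
\]
which is exactly the triangle appearing on the target side of the claimed isomorphism.

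The key point is now that, under the hypothesis $f_2=0$, the first morphism $\binom{f_1}{f_2}$ of the given triangle is literally $\binom{f_1}{0}$, so the given triangle and the one just constructed complete the \emph{same} morphism $N\to L_1\oplus L_2$. I would then apply the fill-in axiom to the commutative square built from $\Id_N$ and $\Id_{L_1\oplus L_2}$ to obtain a morphism of triangles which is the identity on $N$, on $L_1\oplus L_2$ and on $N[1]$; the triangulated version of the five lemma (two of the three vertical maps being isomorphisms forces the third to be one) then shows that its component $M\to M'\oplus L_2$ is an isomorphism. Being a morphism of triangles, this isomorphism commutes with all three structure maps, which is precisely what the conclusion asserts.

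For the variant with $g_2=0$ the same recipe works with the indices shifted by one: complete $g_1\colon L_1\to M$ to a distinguished triangle $N'\xrightarrow{f'}L_1\xrightarrow{g_1}M\xrightarrow{h''}N'[1]$, take the direct sum with the split triangle $L_2\xrightarrow{\Id}L_2\to 0\to L_2[1]$, and conclude via the fill-in axiom and the five lemma as before (equivalently, dualise). There is no essential obstacle here; the only things requiring a moment's care are the two standard facts invoked above --- that direct sums of distinguished triangles are distinguished, and that the triangulated five lemma holds --- together with the observation that the fill-in axiom followed by the five lemma really yields an isomorphism of triangles, not merely of their third vertices.
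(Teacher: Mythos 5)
Your argument is correct and is the standard proof of this fact: complete $f_1$ to a triangle, add the split triangle on $L_2$, and compare the result with the given triangle via the fill-in axiom and the triangulated five lemma, since both complete the same morphism $\binom{f_1}{0}$. The paper itself gives no proof, quoting the statement from Peng and Xiao, and their argument is of the same nature, so there is nothing to object to beyond noting that in the $g_2=0$ case one must first rotate the triangles before applying the fill-in axiom (which your remark about shifting indices already accounts for).
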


\begin{lemma}\label{xiaoqu}
Let $M, N, L_1, L_2\in D^b(A)$.

$(1)$ If $\Hom(N,L_2)=0$, then
$F_{M,N}^{L_1\oplus L_2}=F_{M',N}^{L_1}\cdot{\{N,L_2\}},$ whenever $M'\oplus L_2\cong M$.

$(2)$ If $\Hom(L_2,M)=0$, then
$F_{M,N}^{L_1\oplus L_2}=F_{M,N'}^{L_1}\cdot\{L_2,M\}$, whenever $N'\oplus L_2\cong N$.

\end{lemma}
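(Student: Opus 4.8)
The plan is to prove part $(1)$ directly from the orbit-counting definition of derived Hall numbers, using Lemma \ref{youyong} to strip off the summand $L_2$, and then to deduce part $(2)$ from part $(1)$ by rotating triangles via Lemma \ref{xuanzhuan}. For part $(1)$, I would work with the expression
$$F_{M,N}^{L_1\oplus L_2}=\frac{|\Hom(N,L_1\oplus L_2)_M|}{|\Aut N|}\cdot\frac{\{N,L_1\oplus L_2\}}{\{N,N\}}.$$
Since $\Hom(N,L_2)=0$, every morphism $N\to L_1\oplus L_2$ has second component zero, so $\Hom(N,L_1\oplus L_2)_M=\Hom(N,L_1)_M$ as sets; and $\{N,L_1\oplus L_2\}=\{N,L_1\}\cdot\{N,L_2\}$ by multiplicativity of $\{-,-\}$ in the second variable (note $\Hom(N[i],L_2)=0$ for $i>0$ too, so $\{N,L_2\}$ involves only the $i>0$ terms — actually $\{N,L_2\}=\prod_{i>0}|\Hom(N[i],L_2)|^{(-1)^i}$, which is exactly the factor appearing in the statement). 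Then I need to identify $\Hom(N,L_1)_M$ with $\Hom(N,L_1)_{M'}$ where $M=M'\oplus L_2$: a morphism $f_1\colon N\to L_1$ has cone $M$ if and only if it has cone $M'$, since the cone of $f_1$ in $D^b(A)$ is intrinsic and does not see $L_2$; equivalently, by Lemma \ref{youyong} applied to the triangle $N\xrightarrow{(f_1,0)^{\mathrm t}}L_1\oplus L_2\to M\to N[1]$, the cone splits off $L_2$ so $M\cong\Cone(f_1)\oplus L_2$, forcing $\Cone(f_1)\cong M'$. Assembling, $F_{M,N}^{L_1\oplus L_2}=\dfrac{|\Hom(N,L_1)_{M'}|}{|\Aut N|}\cdot\dfrac{\{N,L_1\}}{\{N,N\}}\cdot\{N,L_2\}=F_{M',N}^{L_1}\cdot\{N,L_2\}$.

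For part $(2)$, the hypothesis $\Hom(L_2,M)=0$ together with $N=N'\oplus L_2$ is the ``dual'' situation, and I would reduce it to part $(1)$ by rotating. By Lemma \ref{xuanzhuan}$(2)$, $F_{M,N}^{L_1\oplus L_2}=\dfrac{|\Aut(L_1\oplus L_2)|\cdot\{L_1\oplus L_2,L_1\oplus L_2\}}{|\Aut M|\cdot\{M,M\}}\cdot F_{N[1],L_1\oplus L_2}^{M}$ — but this rotation moves $L_1\oplus L_2$ into the middle slot, which is not directly what I want. A cleaner route: use the other expression
$$F_{M,N}^{L_1\oplus L_2}=\frac{|\Hom(L_1\oplus L_2,M)_{N[1]}|}{|\Aut M|}\cdot\frac{\{L_1\oplus L_2,M\}}{\{M,M\}},$$
and run the same argument on the first variable. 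Since $\Hom(L_2,M)=0$, every morphism $L_1\oplus L_2\to M$ has second component zero, so by the dual form of Lemma \ref{youyong} the cone splits off $L_2[1]$; thus $\Hom(L_1\oplus L_2,M)_{N[1]}=\Hom(L_1,M)_{N'[1]}$ (using $N[1]=N'[1]\oplus L_2[1]$), and $\{L_1\oplus L_2,M\}=\{L_1,M\}\cdot\{L_2,M\}$ with $\{L_2,M\}=\prod_{i>0}|\Hom(L_2[i],M)|^{(-1)^i}$. This yields $F_{M,N}^{L_1\oplus L_2}=F_{M,N'}^{L_1}\cdot\{L_2,M\}$.

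The only genuinely delicate point is the set-theoretic identification $\Hom(N,L_1)_M=\Hom(N,L_1)_{M'}$ in part $(1)$ (and its analogue in part $(2)$): one must check that decomposing $M=M'\oplus L_2$ with $L_2$ a fixed complement is compatible with the cone construction, i.e. that $\Cone(f_1)\cong M$ forces $\Cone(f_1)\cong M'$. This is where Lemma \ref{youyong} does the real work — it guarantees that the ambient triangle with middle term $L_1\oplus L_2$ and zero second component is isomorphic to a direct sum of the triangle built from $f_1$ alone with the split triangle on $L_2$, so by Krull--Schmidt in $D^b(A)$ the cone of $f_1$ is exactly $M'$. Everything else is bookkeeping with the multiplicativity of $\{-,-\}$ and the cancellation of the $\Aut$ and $\{-,-\}$ factors between $F_{M,N}^{L_1\oplus L_2}$ and $F_{M',N}^{L_1}$.
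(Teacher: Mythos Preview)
Your proposal is correct and follows essentially the same route as the paper. The paper works with the orbit set $(W_{MN}^{L_1\oplus L_2})_M^*$ and uses Lemma~\ref{youyong} to set up a bijection with $(W_{M'N}^{L_1})_{M'}^*$, while you phrase the same argument in terms of the equivalent sets $\Hom(N,L_1\oplus L_2)_M$ and $\Hom(N,L_1)_{M'}$; the multiplicative splitting of $\{N,L_1\oplus L_2\}$ and the Krull--Schmidt cancellation of $L_2$ from the cone are handled identically, and for part~$(2)$ both you and the paper simply run the dual argument.
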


\begin{proof}
$(1)$~Since $\Hom(N,L_2)=0$, we get that every orbit in $(W_{MN}^{L_1\oplus L_2})_M^*$ has a representative
$\left({{f_1}\choose{0}},\left({\begin{smallmatrix}g'&\\&1\end{smallmatrix}}\right),(h',0)\right)$ as that in Lemma \ref{youyong}. It is easy to see that there is a bijection from $(W_{MN}^{L_1\oplus L_2})_M^*$ to $(W_{M'N}^{L_1})_{M'}^*$. Hence,
\begin{equation*}\begin{split}F_{M,N}^{L_1\oplus L_2}&=\frac{|(W_{MN}^{L_1\oplus L_2})_M^*|}{|\Aut N|}\cdot\frac{\{N,L_1\oplus L_2\}}{\{N,N\}}\\
&=\frac{|(W_{M'N}^{L_1})_{M'}^*|}{|\Aut N|}\cdot\frac{\{N,L_1\}}{\{N,N\}}\cdot{\{N,L_2\}}\\
&=F_{M',N}^{L_1}\cdot{\{N,L_2\}}.\end{split}\end{equation*}
$(2)$~Similar to (1).
\end{proof}

\subsection{}

For any field extension $E$ of $k$, let $A^E=A\otimes_kE$. For any stalk complex $X$ in $D^b(A)$, $X^E=X\otimes_kE$ is well-defined and it is still a stalk complex in $D^b(A^E)$.
Let $X$ be an arbitrary object in $D^b(A)$, writing $X=X_1\oplus\cdots\oplus X_t$ with all $X_i$ indecomposable, we define $X^E=X_1^E\oplus\cdots\oplus X_t^E$. We denote by $D^b(A)^E$ the full subcategory of $D^b(A^E)$ whose objects are all $X^E$ with $X\in D^b(A)$.

A finite field extension $E$ of $k$ is said to
be \emph{conservative} relative to $X\in D^b(A)$ if for each indecomposable direct summand $Y$ of $X$, $Y^E$ is
indecomposable in $D^b(A^E)$. In general, given a finite set $S = \{X_1, . . . ,X_t\}$ of
objects in $D^b(A)$, a finite field extension $E$ of $k$ is said to be \emph{conservative} relative to
$S$ if $E$ is conservative relative to each $X_i$ for $1\leq i\leq t$. Note that there
exist infinitely many conservative field extensions of $k$ relative to $S$.

By \cite[Lemma 7.4]{Lam}, for any $X,Y\in\mod A$ and any conservative field extension $E$ of $k$ relative to $\{X,Y\}$, we have
\begin{equation}\label{Hom1}\Hom_{A^E}(X^E,Y^E)\cong\left(\Hom_A(X,Y)\right)^E.\end{equation}

\section{Main result}

\begin{definition} {Let $X, Y, L\in D^b(A)$, if there exists a rational function $\varphi_{X,Y}^L\in\mathbb{Q}(T)$ such that for each conservative field extension $E$ of
$k$ relative to $\{X,Y,L\}$,
$$\varphi_{X,Y}^L(|E|)=F_{X^E,Y^E}^{L^E},$$
then we say that the generic function $\varphi_{X,Y}^L$ exists for $\{X, Y, L\}$.
If the generic function $\varphi_{X,Y}^L$ exists for all $X, Y, L\in D^b(A)$, then we say that the generic functions exist for $A$.}
\end{definition}

\begin{remark} By definition, for any exceptional indecomposable $A$-module $X$,
$$\varphi_{X[1],X}^0=\varphi_{X,X[-1]}^0=\frac{1}{T-1}.$$
Hence, quite different from the Hall number case, we can only expect the
derived Hall numbers behave as a rational function rather than a polynomial.
\end{remark}

\noindent$\mathbf{Main~~Theorem}$~~~~
\emph{Let $Q$ be a Dynkin or tame quiver. The generic functions exist for $A=kQ$.}

The proof of this theorem will be given throughout the next two sections, which are devoted to proving the case that $Q$ is of cyclic type and the case that $Q$ is of Dynkin or tame type, respectively. First of all, let us give some preparations.

\begin{lemma}\label{homaut}
For any $X,Y\in D^b(A)$, $\dim_k\Hom(X,Y)$ is independent of the ground field $k$, and there exists a monic polynomial $\mathfrak{a}(T)\in\mathbb{Z}[T]$ such that for each conservative field extension $E$ of $k$ relative to $X$, we have $$\mathfrak{a}(|E|)=|\Aut X^E|.$$
\end{lemma}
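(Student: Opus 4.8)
The plan is to decompose $X \in D^b(A)$ into indecomposable summands and reduce the two assertions to the level of indecomposable stalk complexes, using that $D^b(A)$ for a hereditary algebra has the form $\bigvee_{n\in\mathbb Z}(\mod A)[n]$, so every indecomposable is a shift $M[n]$ of an indecomposable $A$-module $M$. First I would treat $\dim_k\Hom(X,Y)$. Writing $X=\bigoplus X_i$, $Y=\bigoplus Y_j$ with $X_i = M_i[a_i]$, $Y_j = N_j[b_j]$, additivity of $\Hom$ reduces the claim to showing $\dim_k\Hom(M[a],N[b])$ is field-independent for indecomposable modules $M,N$. Since $\Hom(M[a],N[b])$ is $\Hom_A(M,N)$ if $a=b$, is $\Ext^1_A(M,N)$ if $b=a+1$, and is $0$ otherwise (hereditariness), it suffices to recall that for tame (or Dynkin, or cyclic/nilpotent) quivers the dimensions of $\Hom_A$ and $\Ext^1_A$ between indecomposables are governed by the (generalized) Euler form and the combinatorics of the AR-quiver / tubes, hence are independent of $k$; alternatively invoke \eqref{Hom1}, which gives $\dim_{k}\Hom_A(M,N)=\dim_E\Hom_{A^E}(M^E,N^E)$ for conservative $E$, and combine with the long exact sequence (or the derived analogue of \eqref{Hom1}) to handle $\Ext^1$. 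This yields the first statement.

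For the polynomiality of $|\Aut X^E|$, I would again reduce to indecomposables via the standard formula: if $X=\bigoplus_{i} Y_i^{m_i}$ with the $Y_i$ pairwise non-isomorphic indecomposables, then
$$|\Aut X| = \prod_i |\Aut(Y_i^{m_i})| \cdot \prod_{i\neq j} |\Hom(Y_i,Y_j)|^{m_i m_j},$$
and $|\Aut(Y_i^{m_i})|$ is, by the Fitting-type decomposition $\End(Y_i^{m_i}) = M_{m_i}(\End Y_i)$ and the fact that $\End Y_i$ is local, equal to $q_i^{m_i^2 d_i}\prod_{s=1}^{m_i}(1-q_i^{-s})$ where $d_i = \dim_k\End Y_i$ and $q_i$ is the cardinality of the residue field $k_{Y_i}=\End Y_i/\rad\End Y_i$. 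For $X$ a stalk complex this is exactly the classical module computation shifted; for general $X$ one does it summand-by-summand since shifts in different degrees have no maps into each other except the $\Ext^1$ contribution already counted by the $\Hom$ factors above. Under a conservative extension $E$, each $Y_i^E$ stays indecomposable, $\dim_E\End_{A^E}(Y_i^E)=d_i$ is unchanged by the first part, and $[k_{Y_i^E}:E]$ is also unchanged (this is where conservativity really bites: it forces $k_{Y_i^E}=k_{Y_i}\otimes_k E$ to remain a field, so its degree over $E$ equals $[k_{Y_i}:k]=:e_i$, whence $q_i^E = |E|^{e_i}$). Substituting, every factor becomes a fixed integer polynomial evaluated at $|E|$, and the product $\mathfrak a(T) := \prod_i \big(T^{e_i m_i^2 d_i'}\prod_{s=1}^{m_i}(1-T^{-e_i s})\big)\cdot\prod_{i\neq j}(\text{the }\Hom\text{ polynomial})$ is monic in $T$ after clearing the negative powers (the leading term comes from the $q_i^{m_i^2 d_i}$ and $\Hom$-dimension contributions, with the $(1-q^{-s})$ factors contributing a sign-corrected monic polynomial). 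I would verify monicity by tracking the top degree: $\deg\mathfrak a = \sum_i e_i(m_i^2 d_i' - \binom{m_i+1}{2}) + \sum_{i\neq j} m_i m_j \dim_k\Hom(Y_i,Y_j)$, but the precise value is not needed for the statement—only that the leading coefficient is $1$, which is immediate since each local-ring autormorphism-count polynomial $\prod_{s=1}^{m}(T^{d' }\!\cdot\! T^{-\,?}\cdots)$ is visibly monic.

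The main obstacle I anticipate is not the algebra of the automorphism formula but making precise that the residue field degree $e_i = [k_{Y_i}:k]$ is preserved under conservative base change and that $\End_{A^E}(Y_i^E)$ is again local with the expected residue field—in other words, that ``$Y_i^E$ indecomposable'' genuinely propagates through all the numerical invariants $(d_i, e_i)$ entering $\mathfrak a$. For stalk complexes this is \eqref{Hom1} together with the fact that $Y_i^E$ indecomposable means $\End_{A^E}(Y_i^E)$ is local; the residue-field statement then follows because $\End_A(Y_i)\otimes_k E \cong \End_{A^E}(Y_i^E)$ is local, forcing $k_{Y_i}\otimes_k E$ (a subquotient, in fact the residue field) to be a field and hence a field extension of $E$ of degree $e_i$. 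Once this bookkeeping is pinned down, assembling $\mathfrak a(T)$ and checking it is monic with integer coefficients is routine, and the two conclusions of the lemma follow.
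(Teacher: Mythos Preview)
Your approach is correct and essentially the same as the paper's: for the first statement the paper decomposes $X$ and $Y$ by shift degree, invokes Kapranov's formula $\Hom(X,Y)\cong\bigoplus_i\Hom_A(X_{-i},Y_{-i})\oplus\bigoplus_i\Ext^1_A(X_{-i},Y_{-i+1})$, rewrites the $\Ext^1$ terms as $\D\Hom_A(Y_{-i+1},\tau X_{-i})$ via Auslander--Reiten duality, and then applies~\eqref{Hom1}; your indecomposable-by-indecomposable reduction together with~\eqref{Hom1} (plus its consequence for $\Ext^1$) amounts to the same thing. For the second statement the paper simply refers to the arguments in Ringel~\cite[Section~2]{R90}, which is precisely the computation you spell out---decompose $X$ into isotypic pieces, use the matrix description $\End(Y_i^{m_i})=M_{m_i}(\End Y_i)$ with $\End Y_i$ local, and check that conservativity preserves the residue-field degree $e_i$ and the endomorphism dimension $d_i$---so your proposal is a more explicit rendering of what the paper cites (your displayed formula for $|\Aut(Y_i^{m_i})|$ has some index slips, but the structure of the argument is sound).
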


\bp
Write $X=\bigoplus\limits_{i\in\mathbb{Z}}X_i[i]$ and $Y=\bigoplus\limits_{i\in\mathbb{Z}}Y_i[i]$ in $D^b(A)$, where all $X_i$ and $Y_i$ are $A$-modules. By \cite[(2.1.3)]{Kapranov}, we have
\begin{equation}\label{Hom2}
\begin{split}\Hom(X,Y)&\cong\bigoplus\limits_{i\in\mathbb{Z}}\Hom_A(X_{-i},Y_{-i})\oplus
\bigoplus\limits_{i\in\mathbb{Z}}\Ext_A^1(X_{-i},Y_{-i+1})\\
&\cong\bigoplus\limits_{i\in\mathbb{Z}}\Hom_A(X_{-i},Y_{-i})\oplus
\bigoplus\limits_{i\in\mathbb{Z}}\D\Hom_A(Y_{-i+1},\tau X_{-i}),\end{split}\end{equation} where $\tau$ is the Auslander-Reiten translation for $\mod A$, and $\D$ is the linear duality. By (\ref{Hom1}), the first statement is proved.
Using arguments similar to those in \cite[Section 2]{R90}, we prove the second statement.
\ep

\begin{lemma}\label{dkh}
For any $X,Y\in D^b(A)$ and any conservative field extension $E$ of $k$ relative to $\{X,Y\}$, $\{X^E,Y^E\}$ is a power of $|E|$.
\end{lemma}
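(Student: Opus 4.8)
The plan is to reduce the claim to the definition $\{X^E,Y^E\}=\prod_{i>0}|\Hom(X^E[i],Y^E)|^{(-1)^i}$ and show each factor $|\Hom(X^E[i],Y^E)|$ is a power of $|E|$, after which the alternating product is visibly a power of $|E|$ (possibly with negative exponent). Since $D^b(A)$ has finite global dimension, only finitely many $i$ contribute, so the product is finite. Thus it suffices to prove that for any $M,N\in D^b(A)$ and any conservative field extension $E$ of $k$ relative to $\{M,N\}$, the cardinality $|\Hom_{D^b(A^E)}(M^E,N^E)|$ is a power of $|E|$; applying this with $M=X[i]$ and $N=Y$ finishes the argument, noting that $(X[i])^E\cong X^E[i]$ by the way $(-)^E$ is defined on stalk complexes and direct sums.

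First I would use the decomposition \eqref{Hom2} from the proof of Lemma \ref{homaut}: writing $M=\bigoplus_i M_i[i]$ and $N=\bigoplus_i N_i[i]$ with $M_i,N_i\in\mod A$, one has
\[
\Hom(M,N)\cong\bigoplus_i\Hom_A(M_{-i},N_{-i})\oplus\bigoplus_i\Ext_A^1(M_{-i},N_{-i+1}).
\]
This is an isomorphism of $k$-vector spaces, hence $\dim_k\Hom(M,N)=\sum_i\dim_k\Hom_A(M_{-i},N_{-i})+\sum_i\dim_k\Ext_A^1(M_{-i},N_{-i+1})$, and by Lemma \ref{homaut} (its first statement, together with the analogous statement for $\Ext^1$ which follows from it via the $\D\Hom_A(-,\tau-)$ rewriting) this dimension is independent of the ground field. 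The same decomposition applies over $A^E$, since $(-)^E$ commutes with shifts, sums, and (by \eqref{Hom1}) with $\Hom_A$; the AR-translate and linear duality likewise base-change compatibly for a conservative extension, so $\Ext^1_{A^E}(M_{-i}^E,N_{-i+1}^E)\cong(\Ext^1_A(M_{-i},N_{-i+1}))^E$. Therefore $\dim_E\Hom_{D^b(A^E)}(M^E,N^E)=\dim_k\Hom_{D^b(A)}(M,N)=:d$, a fixed nonnegative integer, and consequently $|\Hom_{D^b(A^E)}(M^E,N^E)|=|E|^{d}$.

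Feeding this back, $\{X^E,Y^E\}=\prod_{i>0}|E|^{(-1)^i d_i}=|E|^{\sum_{i>0}(-1)^i d_i}$ where $d_i=\dim_k\Hom(X[i],Y)$, which is a (possibly negative) integer power of $|E|$, as claimed. The main obstacle, and really the only point requiring care, is the compatibility of the Ext-part of \eqref{Hom2} with base change: \eqref{Hom1} is quoted only for $\Hom_A$, so I would either invoke the rewriting $\Ext^1_A(X_{-i},Y_{-i+1})\cong\D\Hom_A(Y_{-i+1},\tau X_{-i})$ already used in the proof of Lemma \ref{homaut} and check that $\tau$ and $\D$ base-change for conservative extensions, or — more cleanly — simply cite that $\dim_k\Hom_{D^b(A)}(X[i],Y)$ is field-independent (the first statement of Lemma \ref{homaut}, applied to the pair $\{X[i],Y\}$) and that $\dim_E\Hom_{D^b(A^E)}(X^E[i],Y^E)$ equals it, so that the Hom-space over $E$ has the same $E$-dimension as the Hom-space over $k$ has $k$-dimension. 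Everything else is bookkeeping with the finiteness of global dimension to guarantee the product is finite.
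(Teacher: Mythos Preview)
Your proposal is correct and takes essentially the same approach as the paper: unwind the definition of $\{X^E,Y^E\}$, observe that each Hom-space over $E$ has $E$-dimension equal to the corresponding $k$-dimension (the first statement of Lemma~\ref{homaut}), and conclude $\{X^E,Y^E\}=|E|^{\sum_{i>0}(-1)^i\dim_k\Hom(X[i],Y)}$. The paper's proof is just the two-line version of yours; your detour through the decomposition \eqref{Hom2} and the base-change of $\Ext^1$ is unnecessary once you invoke Lemma~\ref{homaut} directly, as you yourself note at the end.
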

\bp
By definition, $$\{X^E,Y^E\}=\prod\limits_{i>0}|\Hom_{D^b(A^E)}(X^E[i],Y^E)|^{(-1)^i}=|E|^{\sum_{i>0}(-1)^i\rm{dim}_k\rm{Hom}(X[i],Y)}.$$
By Lemma \ref{homaut}, we complete the proof.
\ep

In what follows we show that the Main Theorem holds for some special triples $\{X,Y,L\}$ of objects in $D^b(A)$.

\begin{lemma}\label{generic function for direct sum}
For any $X,Y\in D^b(A)$, the generic function $\varphi_{X,Y}^{X\oplus Y}$ exists.
\end{lemma}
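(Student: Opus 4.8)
The plan is to reduce the computation of $F_{X^E,Y^E}^{X^E\oplus Y^E}$ to quantities already known to be generic. By Lemma \ref{pinfan}, for any conservative field extension $E$ of $k$ relative to $\{X,Y\}$ (note $E$ is automatically conservative relative to $X\oplus Y$ as well) we have
$$F_{X^E,Y^E}^{X^E\oplus Y^E}=\{Y^E,X^E\}\cdot\frac{|\Aut(X^E\oplus Y^E)|}{|\Hom(X^E,Y^E)|\cdot|\Aut X^E|\cdot|\Aut Y^E|}.$$
So it suffices to show that each of the four factors on the right is given by a rational function in $|E|$, and then the product is too. First I would invoke Lemma \ref{dkh} to see that $\{Y^E,X^E\}$ is a power of $|E|$ with exponent $\sum_{i>0}(-1)^i\dim_k\Hom(Y[i],X)$, which by Lemma \ref{homaut} is independent of $E$; hence $\{Y^E,X^E\}=T^{c}$ evaluated at $T=|E|$ for a fixed integer $c$. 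Next, Lemma \ref{homaut} gives directly that $|\Hom(X^E,Y^E)|=|E|^{d}$ for the fixed integer $d=\dim_k\Hom(X,Y)$, and that $|\Aut X^E|=\mathfrak{a}_X(|E|)$, $|\Aut Y^E|=\mathfrak{a}_Y(|E|)$ for fixed monic polynomials $\mathfrak{a}_X,\mathfrak{a}_Y\in\mathbb{Z}[T]$.

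The only remaining factor is $|\Aut(X^E\oplus Y^E)|$, and for this I would again apply Lemma \ref{homaut} with the single object $X\oplus Y\in D^b(A)$ in place of $X$: since $E$ is conservative relative to $X\oplus Y$, there is a fixed monic polynomial $\mathfrak{a}_{X\oplus Y}\in\mathbb{Z}[T]$ with $|\Aut(X^E\oplus Y^E)|=\mathfrak{a}_{X\oplus Y}(|E|)$. Assembling the pieces, the rational function
$$\varphi_{X,Y}^{X\oplus Y}(T):=T^{\,c-d}\cdot\frac{\mathfrak{a}_{X\oplus Y}(T)}{\mathfrak{a}_X(T)\,\mathfrak{a}_Y(T)}\ \in\ \mathbb{Q}(T)$$
satisfies $\varphi_{X,Y}^{X\oplus Y}(|E|)=F_{X^E,Y^E}^{X^E\oplus Y^E}$ for every conservative $E$, which is exactly the required statement.

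There is essentially no hard step here: the lemma is a direct corollary of Lemma \ref{pinfan} together with the genericity statements in Lemmas \ref{homaut} and \ref{dkh}. The one point that deserves a line of care is that $|\Aut(X^E\oplus Y^E)|$ should not be handled by the special formula $|\Aut X|\cdot|\Aut Y|\cdot|\Hom(X,Y)|\cdot|\Hom(Y,X)|$ from the ``no common summand'' case of Lemma \ref{pinfan}, since $X$ and $Y$ may well share indecomposable summands; instead one treats $X\oplus Y$ as a single object and quotes Lemma \ref{homaut}, which applies verbatim to any object of $D^b(A)$. With that observation in place the proof is a short bookkeeping argument combining the displayed formula with the three cited lemmas.
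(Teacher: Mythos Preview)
Your proof is correct and follows exactly the same approach as the paper, which simply cites Lemmas \ref{pinfan}, \ref{homaut}, and \ref{dkh}. Your write-up merely makes explicit the bookkeeping that the paper leaves to the reader.
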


\bp
This follows from Lemma \ref{pinfan}, Lemma \ref{homaut} and Lemma \ref{dkh}.
\ep

Combining Lemma $\ref{same}$ with \cite[Corollary 6.4]{DR}, we have the following result.

\begin{lemma}\label{Hall}
Let $X, Y, L\in \mod A$. Then there exists a polynomial $\varphi_{X,Y}^L\in\mathbb{Z}[T]$ such that for each conservative field extension $E$ of
$k$ relative to $\{X,Y,L\}$,
$$\varphi_{X,Y}^L(|E|)=F_{X^EY^E}^{L^E}.$$
\end{lemma}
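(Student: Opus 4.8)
The plan is to reduce the statement of Lemma \ref{Hall} to the existence of Hall polynomials for $\mod A$, which is exactly the content of \cite[Corollary 6.4]{DR}, via the bridge provided by Lemma \ref{same}. Since $X, Y, L$ all lie in $\mod A$, and $\mod A$ is a hereditary abelian subcategory of $D^b(A)$ which is (trivially) derived equivalent to itself, Lemma \ref{same} applies and tells us that the derived Hall number $F_{X,Y}^L$ coincides with the ordinary Hall number $g_{X,Y}^L$ counting submodules $U \subseteq L$ with $U \cong Y$ and $L/U \cong X$. The same identification holds verbatim after any field extension, so $F_{X^E, Y^E}^{L^E} = g_{X^E, Y^E}^{L^E}$ for every conservative $E$.

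Next I would invoke \cite[Corollary 6.4]{DR}, which asserts the existence of Hall polynomials for $A = kQ$ with $Q$ a Dynkin or tame quiver with respect to conservative field extensions: there is a polynomial $\varphi_{X,Y}^L \in \mathbb{Z}[T]$ with $\varphi_{X,Y}^L(|E|) = g_{X^E, Y^E}^{L^E}$ for every conservative field extension $E$ of $k$ relative to $\{X,Y,L\}$. One should be slightly careful that the notion of ``conservative'' used in \cite{DR} for modules matches the one in the present paper restricted to stalk complexes — but since for a stalk complex $X$ the indecomposable summands of $X$ as a complex are precisely the stalks of the indecomposable module summands of $X$, the two notions agree on $\mod A$, so this is only a matter of checking conventions rather than a genuine difficulty. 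Combining the two displayed identities yields $\varphi_{X,Y}^L(|E|) = F_{X^E, Y^E}^{L^E}$, which is exactly the claim.

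The only real content here is the cross-reference bookkeeping: one must confirm that \cite[Corollary 6.4]{DR} is stated for exactly the class of quivers (Dynkin and tame) under consideration, and that its ``generic'' statement is with respect to conservative (rather than arbitrary, or Bongartz–Dudek-decomposition-class) field extensions in the sense used here. Assuming that match — which the sentence preceding the lemma already asserts by ``Combining Lemma \ref{same} with \cite[Corollary 6.4]{DR}'' — the proof is immediate. I do not expect any genuine obstacle; the lemma is a packaging result that imports the known existence of Hall polynomials into the derived-Hall-number language, and the substantive work lies entirely in \cite{DR}.
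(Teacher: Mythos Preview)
Your proposal is correct and follows exactly the paper's approach: the paper's entire argument is the single sentence ``Combining Lemma~\ref{same} with \cite[Corollary 6.4]{DR}, we have the following result,'' and your write-up simply unpacks this. The extra care you take about matching the two notions of ``conservative'' is reasonable bookkeeping but not something the paper addresses explicitly.
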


\begin{remark}\label{reduce to compact case}
For a given triple $\{X,Y,L\}$ of objects in $D^b(A)$, in order to consider the existence of the generic function $\varphi_{X,Y}^L$, we will always assume that $F_{X,Y}^{L}\neq 0$ from now onwards, otherwise we can take $\varphi_{X,Y}^L=0$. Moreover, taking into account Lemmas \ref{xiaoqu} and \ref{dkh}, we further assume that $\Hom(L_i, X)\neq 0$ and $\Hom(Y, L_i)\neq 0$ for any non-zero direct summand $L_i$ of $L$.
\end{remark}

\section{The cyclic quiver case}

In this section, we will prove the Main Theorem for the case that $Q$ is a cyclic quiver. Let $\mathcal {J}$ and $D^b(\mathcal{J})$ be the category of finite dimensional nilpotent $k$-representations of the cyclic quiver $Q$ and its bounded derived category, respectively. It is known that Hall polynomials exist for $\mathcal {J}$; see for example \cite{Guo,Hub,R93}.

We first consider the following special case:

\begin{lemma}\label{cyclic and Y ind}
Let $Y$ be an indecomposable object in $D^b(\mathcal{J})$. Then the generic function $\varphi_{X,Y}^L$ exists for any $X,L\in D^b(\mathcal {J})$.
\end{lemma}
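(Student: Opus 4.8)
The plan is to induct on the number of indecomposable direct summands of $X$, and for fixed $X$ to induct on the "cohomological spread" of the triple. The base case $X=0$ is trivial, and the case where $X$ is itself a stalk complex lying in a single shifted copy of $\mathcal{J}$ can be arranged, after rotating via Lemma \ref{xuanzhuan}, to fall under Lemma \ref{Hall} (Hall polynomials in $\mathcal{J}$) once we also know $Y$ and $L$ can be brought into the same hereditary heart; the point of assuming $Y$ indecomposable is precisely that after rotation $Y[1]$ or $Y[-1]$ is again indecomposable and concentrated in one degree, so the triangle $Y\to L\to X\to Y[1]$ only involves at most two consecutive shifts of $\mathcal{J}$. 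First I would use Remark \ref{reduce to compact case} to assume $F_{X,Y}^L\neq 0$ and that every nonzero summand $L_i$ of $L$ has $\Hom(L_i,X)\neq 0$ and $\Hom(Y,L_i)\neq 0$; combined with the orthogonality/Hom-vanishing between different shifts this already forces $L$ to live in a narrow range of degrees relative to $Y$.

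Next I would split off "free" parts using Lemma \ref{xiaoqu}: if $L$ has a summand $L_2$ with $\Hom(Y,L_2)=0$ or $\Hom(L_2,X)=0$, peel it off against a summand of $X$ or $Y$ and reduce, using Lemma \ref{dkh} and Lemma \ref{homaut} to control the extra factor $\{N,L_2\}$ or $\{L_2,M\}$ generically. After these reductions one is left with a triple in which $L$, $X$ and $Y$ are "tightly interlocked". Here I would rotate the defining triangle using Lemma \ref{xuanzhuan}, replacing $F_{X,Y}^L$ by $F_{Y[1],L}^X$ or $F_{L,X[-1]}^Y$ up to a ratio of automorphism-group orders and $\{-,-\}$ terms, each of which is generic by Lemmas \ref{homaut} and \ref{dkh}. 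The aim of the rotation is to decrease the number of indecomposable summands of the first argument, or to move the configuration so that the middle term becomes a direct sum handled by Lemma \ref{generic function for direct sum}. If instead $X$ is decomposable, write $X=X'\oplus X''$ with $X''$ indecomposable, and apply the associativity formula
$$\sum_{[W]}F_{X',X''}^W F_{W,Y}^L=\sum_{[U]}F_{X',U}^L F_{X'',Y}^U$$
to express $F_{X'\oplus X'', Y}^L$ (which appears in the left sum with $W=X'\oplus X''$, whose coefficient $F_{X',X''}^{X'\oplus X''}$ is generic and nonzero by Lemma \ref{pinfan}) in terms of derived Hall numbers with strictly fewer summands in the distinguished argument: on the right, $F_{X'',Y}^U$ has $X''$ indecomposable, hence is (by the induction hypothesis in the "$X$ indecomposable" case, itself reduced to Lemma \ref{Hall} after rotation) generic, and $F_{X',U}^L$ has fewer summands; all other $W$ on the left also have fewer summands than $X'\oplus X''$ in... — concretely, one isolates $F_{X'\oplus X'',Y}^L$ as the unique term on the left whose $W$ is not "smaller", moves everything else to the right, and the right-hand side is a $\mathbb{Q}(T)$-combination (with generically-nonzero leading coefficient) of generic functions, hence generic.

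The main obstacle I expect is controlling the \emph{finiteness of the sums} in the associativity formula and, more seriously, ensuring that when we solve for $F_{X'\oplus X'',Y}^L$ the coefficient $F_{X',X''}^{X'\oplus X''}$ is not merely generic but \emph{generically nonzero} so that division by it stays inside $\mathbb{Q}(T)$ — this is where Lemma \ref{pinfan}'s explicit formula is essential, since $\{Y,X\}\cdot|\Hom(Y,X)|$ type expressions are manifestly nonzero powers of $q$ times positive quantities. A secondary difficulty is bookkeeping the induction order: one must fix a well-founded measure on triples (lexicographically: number of summands of $X$, then total dimension of $X\oplus Y\oplus L$, then cohomological width) and check that every term produced by rotation or associativity is strictly smaller, using the Hom-vanishing between $\mathcal{P}$-type shifts in $D^b(\mathcal{J})$ — here trivial since $\mathcal{J}$ is a single tube-like category, but still requiring care that rotation does not increase the summand count of the distinguished argument. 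Finally, the stalk-complex base case genuinely uses that all objects in $\mathcal{J}$, and the shift-ambiguity being only by $[1]$, let us invoke Lemma \ref{same} to identify the relevant derived Hall number with an honest Hall number in a heart equivalent to $\mathcal{J}$, at which point Lemma \ref{Hall} applies verbatim.
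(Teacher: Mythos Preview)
Your inductive plan has a genuine gap in the associativity step. When you split $X=X'\oplus X''$ and write
\[
\sum_{[W]}F_{X',X''}^{W}\,F_{W,Y}^{L}=\sum_{[U]}F_{X',U}^{L}\,F_{X'',Y}^{U},
\]
the factor $F_{X',U}^{L}$ on the right has $U$ sitting in the second slot, and $U$ is in general \emph{decomposable}. Since the statement you are proving only asserts the existence of $\varphi_{X,Y}^{L}$ for $Y$ indecomposable, your induction hypothesis simply does not cover $F_{X',U}^{L}$; reducing the number of summands of the first argument is useless if you simultaneously destroy the indecomposability hypothesis on the second. A related problem is your unproved claim that every $W\neq X'\oplus X''$ on the left ``has fewer summands than $X'\oplus X''$''; in a tube there is no reason this holds. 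Finally, your base case ($X$ indecomposable) is too optimistic: when $Y\in\mathcal J$ and $X\in\mathcal J[1]$, the middle term $L$ genuinely lives in two consecutive degrees, and no rotation via Lemma~\ref{xuanzhuan} lands all three objects in a single heart, so Lemma~\ref{Hall} alone cannot close the argument.

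The paper avoids all of this by splitting $X$ \emph{by degree} rather than by peeling off an indecomposable. After normalising to $Y\in\mathcal J$, Remark~\ref{reduce to compact case} forces $X=X_0\oplus X_1[1]$ and $L=L_0\oplus L_1[1]$ with $X_i,L_i\in\mathcal J$, and one applies associativity to $\{X_0,X_1[1],Y;L\}$. Since $\Hom(X_0,X_1[2])=\Ext^2_{\mathcal J}(X_0,X_1)=0$, the only $W$ with $F_{X_0,X_1[1]}^{W}\neq0$ is $W=X$, and $F_{X_0,X_1[1]}^{X}=1$; so the left side collapses to $F_{X,Y}^{L}$. On the right one shows each $U$ is of the form $U_0\oplus L_1[1]$ with $U_0\in\mathcal J$, so $F_{X_0,U}^{L}=F_{X_0,U_0}^{L_0}$ is an honest Hall number, while the remaining cross-degree factor $F_{X_1[1],Y}^{U_0\oplus L_1[1]}$ is handled by an explicit identity (from To\"en and Sevenhant--Van den Bergh) expressing it as a finite sum of products of Hall numbers in $\mathcal J$. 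No induction on summands is needed; indecomposability of $Y$ is used only to pin $X$ and $L$ to two consecutive degrees.
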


\bp
Without loss of generality, we assume that $Y\in\mathcal {J}$. Then by Remark \ref{reduce to compact case} we can assume that $X=X_0\oplus X_1[1]$ and $L=L_0\oplus L_1[1]$ with $X_i, L_i\in\mathcal{J}$ for $i=0,1$. Then the associativity of derived Hall algebras for $\{X_0,X_1[1],Y;L\}$ produces that
\begin{equation}\label{F0}F_{X_0,X_1[1]}^{X}F_{X,Y}^{L}=\sum\limits_{[U]}F_{X_0,U}^{L}F_{X_1[1],Y}^{U}.\end{equation}
Note that $F_{X_0,X_1[1]}^{X}=1$ by Lemma \ref{pinfan}, and from $F_{X_0,U}^{L}\neq 0$ and $F_{X_1[1],Y}^U\neq0$ we can write $U=U_0\oplus L_1[1]$ for some $U_0\in\mathcal {J}$. Hence, by Lemma \ref{xiaoqu} we have
\begin{equation}\label{F1}F_{X_0,U_0\oplus L_1[1]}^{L_0\oplus L_1[1]}=F_{X_0,U_0}^{L_0}\cdot\{L_1[1],X_0\}=F_{X_0,U_0}^{L_0};\end{equation}
and by the proof of \cite[Proposition 7.1]{Toen} and \cite[(8.8)]{Van} we obtain that
\begin{equation}\label{F2}F_{X_1[1],Y}^{U_0\oplus L_1[1]}=q^{-\lr{U_0, L_1}}\cdot\frac{|\Aut L_1|\cdot|\Aut U_0|}{|\Aut X_1|\cdot|\Aut Y|}\sum\limits_{[L']:L'\in\mathcal {J}}|\Aut L'|\cdot F_{L',L_1}^{X_1}F_{U_0,L'}^Y,\end{equation}where
$\lr{U_0, L_1}:=\dim_k \Hom_{\mathcal{J}}(U_0,L_1)-\dim_k \Ext^1_{\mathcal{J}}(U_0,L_1)$ denotes the Euler form of $\mathcal{J}$, which is independent of the ground field.
By the fact that Hall polynomials exist for $\mathcal{J}$ and Lemma \ref{homaut}, the proof is finished. \ep

In order to prove the existence of the generic function $\varphi_{X,Y}^L$ for any $X,Y,L\in D^b(\mathcal {J})$, we need the following general result in triangulated categories.

\begin{lemma}\label{Ext}
Let $$\xymatrix{\xi: X\ar[r]^-u&Y\ar[r]^-v&Z\ar[r]^-w&TX}$$ be a triangle in a $k$-linear triangulated category $(\mathcal {C},T)$. Then for any object $M\in\mathcal{C}$,
\begin{itemize}
\item[$(1)$] $\dim_k\Hom(M,TY)\leq\dim_k\Hom(M,TX\oplus TZ)$ and the strict inequality holds if there exists $f\in\Hom(M,Z)$ such that $wf\neq0$.
\item[$(2)$] $\dim_k\Hom(Y,TM)\leq\dim_k\Hom(X\oplus Z,TM)$ and the strict inequality holds if there exists $g\in\Hom(X,M)$ such that $(Tg)w\neq0$.
\end{itemize}
In particular, \begin{equation}\label{denghao}\dim_k\Hom(Y,TY)\leq\dim_k\Hom(X\oplus Z,TX\oplus TZ)\end{equation}
and the equality holds if and only if the triangle $\xi$ is split, equivalently, $Y\cong X\oplus Z$.
\end{lemma}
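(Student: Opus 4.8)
The plan is to analyze the long exact sequences obtained by applying the cohomological functors $\Hom(M,-)$ and $\Hom(-,TM)$ to the triangle $\xi$, and then to specialize to the case $M = Y$ together with a diagonal argument to extract the sharp split-versus-nonsplit dichotomy.

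\medskip

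For part (1), I would apply $\Hom(M,-)$ to the rotated triangles. From the triangle $\xi$ one gets the triangle $Y \xrightarrow{v} Z \xrightarrow{w} TX \xrightarrow{-Tu} TY$, and applying $\Hom(M,-)$ yields an exact sequence
\[
\Hom(M,Z)\xrightarrow{w_*}\Hom(M,TX)\xrightarrow{(-Tu)_*}\Hom(M,TY)\xrightarrow{(-Tv)_*}\Hom(M,TZ).
\]
Exactness gives $\dim_k\Hom(M,TY) = \dim_k\operatorname{im}(-Tu)_* + \dim_k\operatorname{im}(-Tv)_* \le \dim_k\Hom(M,TX) + \dim_k\Hom(M,TZ)$, where the first inequality uses $\dim\operatorname{im}(-Tu)_* \le \dim\Hom(M,TX)$ and $\dim\operatorname{im}(-Tv)_* \le \dim\Hom(M,TZ)$. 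For strictness: if there exists $f\in\Hom(M,Z)$ with $wf\neq 0$, then $\operatorname{im}(w_*)\neq 0$, so $\ker(-Tu)_* = \operatorname{im}(w_*) \neq 0$, hence $\dim\operatorname{im}(-Tu)_* < \dim\Hom(M,TX)$, which forces the strict inequality. Part (2) is entirely dual: apply the contravariant functor $\Hom(-,TM)$ to the triangle $X \xrightarrow{u} Y \xrightarrow{v} Z \xrightarrow{w} TX$, producing the exact sequence $\Hom(TX,TM)\to\Hom(Z,TM)\to\Hom(Y,TM)\to\Hom(X,TM)\to\Hom(TZ,TM)$, wait — more carefully, applying $\Hom(-,TM)$ to $\xi$ gives exactness of
\[
\Hom(Z,TM)\xrightarrow{v^*}\Hom(Y,TM)\xrightarrow{u^*}\Hom(X,TM)\xrightarrow{w^*}\Hom(TX,TM),
\]
and since $\Hom(TX,TM)\cong\Hom(X,M)$ via $T$, the same counting argument applies, with the strictness hypothesis $(Tg)w\neq 0$ for some $g\in\Hom(X,M)$ translating to $w^*$ being nonzero on $\Hom(X,M)\cong\Hom(TX,TM)$, hence $\ker u^* = \operatorname{im} v^*$ strictly contained, no — rather $u^*$ not surjective, giving $\dim\operatorname{im}(u^*) < \dim\Hom(X,TM)$.

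\medskip

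For the ``in particular'' statement, I would take $M = Y$ in both (1) and (2) and add the two inequalities, giving
\[
2\dim_k\Hom(Y,TY) \le \bigl(\dim_k\Hom(Y,TX)+\dim_k\Hom(Y,TZ)\bigr) + \bigl(\dim_k\Hom(X,TY)+\dim_k\Hom(Z,TY)\bigr),
\]
and observe that the right-hand side equals $\dim_k\Hom(X\oplus Z, TX\oplus TZ)$ minus the ``corner'' terms — actually one must be a bit more careful: $\dim_k\Hom(X\oplus Z,TX\oplus TZ) = \dim\Hom(X,TX)+\dim\Hom(X,TZ)+\dim\Hom(Z,TX)+\dim\Hom(Z,TZ)$, whereas the bound from (1) with $M=Y$ is $\dim\Hom(Y,TY)\le\dim\Hom(Y,TX)+\dim\Hom(Y,TZ)$, which is not yet matching. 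So instead I would argue directly: apply (1) with $M = X$ and with $M = Z$ as well, or better, note that since $u^*,v_*$ etc. govern everything, the cleanest route is: from the split case $Y\cong X\oplus Z$ equality clearly holds; conversely, suppose equality holds in \eqref{denghao}. Then in particular the inequality of (1) with $M=Y$ and the inequality of (2) with $M=Y$ must both be equalities after suitable bookkeeping, which by the strictness clauses forces $wf = 0$ for all $f\in\Hom(Y,Z)$; taking $f = v$ gives $wv = 0$, but $wv$ is already zero, so that is no obstruction — hence I need the sharper observation that equality in \eqref{denghao} forces $w = 0$ (because $w = w\cdot\mathrm{id}_Z$ and one tests with $f = \mathrm{id}_Z\in\Hom(Z,Z)$, using $M = Z$ in part (1): equality there needs $wf = 0$ for all $f\in\Hom(Z,Z)$, in particular $w = 0$), and $w = 0$ implies $\xi$ splits so $Y\cong X\oplus Z$.

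\medskip

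I expect the main obstacle to be the bookkeeping in the ``in particular'' part: getting the inequality \eqref{denghao} to come out with exactly the stated right-hand side $\dim_k\Hom(X\oplus Z,TX\oplus TZ)$ requires combining parts (1) and (2) in the right way and being careful about which of the four Hom-spaces $\Hom(X,TX),\Hom(X,TZ),\Hom(Z,TX),\Hom(Z,TZ)$ each term of the two long exact sequences contributes to — a naive addition double-counts or undercounts. The resolution is to run part (1) with $M$ ranging over $X$ and $Z$ separately (giving $\dim\Hom(X,TY)\le\dim\Hom(X,TX)+\dim\Hom(X,TZ)$ and $\dim\Hom(Z,TY)\le\dim\Hom(Z,TX)+\dim\Hom(Z,TZ)$), add these, and then feed the result into part (2) with $M = Y$: $\dim\Hom(Y,TY)\le\dim\Hom(X,TY)+\dim\Hom(Z,TY)$. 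Chaining these gives \eqref{denghao} exactly, and tracking when each link is an equality — via the strictness clauses, which collectively force $w = 0$ — yields the split conclusion. The equivalence $w = 0 \iff \xi$ split $\iff Y\cong X\oplus Z$ is then a standard fact about triangulated categories.
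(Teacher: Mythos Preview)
Your approach is correct and matches the paper's essentially line for line: the paper also extracts the inequality in (1) from the long exact sequence obtained by applying $\Hom(M,-)$ to $\xi$ (with the strictness coming from $\operatorname{im} w_*\neq 0$), treats (2) dually, and obtains \eqref{denghao} by chaining (1) and (2) with the choices $M=Y$ and $M=X\oplus Z$. The only cosmetic difference is the order of chaining --- the paper applies (1) with $M=Y$ and then (2) with $M=X\oplus Z$, whereas you eventually settle on (2) with $M=Y$ and then (1) with $M=X,Z$ --- and correspondingly the paper forces $w=0$ in the equality case via (2) with $g=\binom{1}{0}:X\to X\oplus Z$, while you use (1) with $M=Z$ and $f=\mathrm{id}_Z$; these are symmetric and equally valid.
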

\bp
This proof is an adaptation of the proof in \cite[Lemma 2.1]{GuoP}.

$(1)$ Applying $\Hom(M,-)$ to the triangle $\xi$, we obtain a long exact sequence
$$\cdots\to\Hom(M,Z)\xrightarrow{w_\ast}\Hom(M,TX)\to\Hom(M,TY)\to\Hom(M,TZ)\to\cdots.$$
It follows that \begin{equation*}\begin{split}
\dim_k\Hom(M,TY)&\leq\dim_k\Hom(M,TX)+\dim_k\Hom(M,TZ)-\dim_k\im w_\ast\\
&=\dim_k\Hom(M,TX\oplus TZ)-\dim_k\im w_\ast\\
&\leq\dim_k\Hom(M,TX\oplus TZ).
\end{split}
\end{equation*}
If there exists $f\in\Hom(M,Z)$ such that $wf\neq0$, that is, $w_\ast$ is not zero and thus the last inequality above must be strict.

$(2)$ This is proved in a similar way to $(1)$.

In particular, taking $M=Y$ and $M=X\oplus Z$ in $(1)$ and $(2)$, respectively, we obtain
\begin{equation}\label{dengshi}\dim_k\Hom(Y,TY)\leq\dim_k\Hom(Y,TX\oplus TZ)\leq\dim_k\Hom(X\oplus Z,TX\oplus TZ).\end{equation}
If the equality in $(\ref{denghao})$ holds, then the two equalities in $(\ref{dengshi})$ both hold. So by $(2)$ for $g={1\choose 0}: X\to X\oplus Z$, we have $(Tg)w=0$. Hence, $w=0$ and thus $\xi$ is split. Conversely, if $\xi$ is split, then clearly the equality in $(\ref{denghao})$ holds. The last equivalence follows from \cite[Lemma 3]{PX2000}.
\ep

For each object $M\in D^b(\mathcal{J})$, we define $d(M)$ to be the number of indecomposable direct summands of $M$,  and set $l(M)=\dim_k\Hom(M,M[1])$.

\begin{lemma}\label{lx0}
For any $Y\in D^b(\mathcal{J})$ with $l(Y)=0$, the generic function $\varphi_{X,Y}^L$ exists for any $X,L\in D^b(A)$.
\end{lemma}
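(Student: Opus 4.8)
The plan is to reduce the general case to the already-established Lemma \ref{cyclic and Y ind} (the case $Y$ indecomposable) by inducting on $d(Y)$, the number of indecomposable direct summands of $Y$. The condition $l(Y)=\dim_k\Hom(Y,Y[1])=0$ is crucial and stable under the reduction: by the particular case of Lemma \ref{Ext} (inequality (\ref{denghao})), if $Y=Y'\oplus Y''$ then $l(Y')+l(Y'')\le \dim_k\Hom(Y,Y[1])=l(Y)=0$, so any direct summand $Y'$ of $Y$ again satisfies $l(Y')=0$. More importantly, $l(Y)=0$ forces $\Hom(Y'',Y'[1])=0$ for any decomposition $Y=Y'\oplus Y''$, which is exactly the vanishing hypothesis needed to apply Lemma \ref{xiaoqu} and Lemma \ref{youyong}-type splitting arguments.

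The base case $d(Y)=1$ is Lemma \ref{cyclic and Y ind}. For the inductive step, write $Y=Y'\oplus Y''$ with $Y'$ indecomposable and $d(Y'')=d(Y)-1$, both still satisfying the $l(-)=0$ hypothesis. I would apply the associativity of the derived Hall algebra to the quadruple $\{Y', Y'', -; L\}$, mimicking the proof of Lemma \ref{cyclic and Y ind}: since $\Hom(Y'',Y'[1])=0$ we have $F_{Y',Y''}^{Y'\oplus Y''}\neq 0$ and in fact the triangle $Y''\to Y'\oplus Y''\to Y'\to Y''[1]$ is split, so by Lemma \ref{pinfan} (together with Lemma \ref{homaut} and Lemma \ref{dkh}) $F_{Y',Y''}^{Y}$ is a known generic rational function, nonzero. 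Then
$$F_{Y',Y''}^{Y}\cdot F_{Y,X[-1]}^{M}=\sum_{[U]}F_{Y',U}^{M}\,F_{Y'',X[-1]}^{U}$$
for a suitable rotation, where I have used Lemma \ref{xuanzhuan} to convert $F_{X,Y}^{L}$ into a number with $Y$ appearing as the first argument — more precisely, I would rotate so that the term I want to isolate, $F_{X,Y}^L$, is pulled out as the unique summand on one side in which the ``new middle term'' is forced. Concretely: in the sum over $[U]$, the term with $U = X[-1]\oplus(\text{something built from }Y'')$ — or after the appropriate rotation, the term reproducing $L$ — can be separated out because the remaining summands $F_{Y'',X[-1]}^{U}$ and $F_{Y',U}^{M}$ all involve objects with strictly fewer indecomposable summands playing the role of ``$Y$'', hence fall under the induction hypothesis, while the coefficient $F_{Y',Y''}^{Y}$ is a nonzero generic function that can be divided out in $\mathbb{Q}(T)$.

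The main obstacle I anticipate is the bookkeeping in the associativity identity: one must choose the rotation (via Lemma \ref{xuanzhuan}) and the ordering of the triple so that $F_{X,Y}^L$ genuinely appears as an \emph{isolated} term with nonzero, generically-known coefficient, and so that \emph{every} other term on both sides is covered by either the induction hypothesis on $d$, or by Lemma \ref{cyclic and Y ind}, or by the ``direct sum'' Lemmas \ref{pinfan}/\ref{generic function for direct sum}. Showing that the other summands $F_{Y'',X[-1]}^{U}$ only involve middle terms $U$ whose relevant ``$Y$-slot'' has $l=0$ and smaller $d$ requires again invoking Lemma \ref{Ext} to control which $U$ can occur with nonzero structure constant — that step, and ruling out runaway growth of the index set, is where the real care is needed. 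Once that is done, the finiteness of the sum and the fact that $\mathbb{Q}(T)$ is closed under the arithmetic operations involved give the generic function $\varphi_{X,Y}^L$ immediately. \fin
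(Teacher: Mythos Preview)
Your overall strategy---induct on $d(Y)$, split $Y=Y_1\oplus Y_2$, use associativity, and exploit $l(Y)=0$ to control the extra terms---is exactly the paper's. But you have introduced an unnecessary detour that, as written, does not actually work. The rotation via Lemma~\ref{xuanzhuan} is not needed, and in fact the expression $F_{Y,X[-1]}^{M}$ that you wrote is \emph{not} a rotation of $F_{X,Y}^{L}$: the two rotations available are $F_{L,X[-1]}^{Y}$ and $F_{Y[1],L}^{X}$, neither of which puts $Y$ in the first slot with $X[-1]$ in the second. So the displayed identity in your proposal is not an instance of associativity applied to any rotation of the triangle in question.

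The fix is simpler than what you attempted. Apply associativity \emph{directly} to the quadruple $\{X,Y_1,Y_2;L\}$:
\[
\sum_{[W]}F_{X,Y_1}^{W}F_{W,Y_2}^{L}=\sum_{[U]}F_{X,U}^{L}F_{Y_1,Y_2}^{U}.
\]
Now use the vanishing you already identified: $l(Y)=0$ gives $\Hom(Y_1,Y_2[1])=0$, so any triangle $Y_2\to U\to Y_1\to Y_2[1]$ is split and hence $F_{Y_1,Y_2}^{U}\neq0$ forces $U\cong Y$. The right-hand side therefore collapses to the single term $F_{X,Y}^{L}\,F_{Y_1,Y_2}^{Y}$, with no rotation and no extraneous summands to control. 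The coefficient $F_{Y_1,Y_2}^{Y}$ is a nonzero generic function by Lemma~\ref{generic function for direct sum}, and on the left-hand side every term $F_{X,Y_1}^{W}$ and $F_{W,Y_2}^{L}$ has its second argument of strictly smaller $d$ (with $l(Y_i)=0$ preserved), so the induction hypothesis applies immediately. The finiteness and field-independence of the index set $\{[W]\}$ is automatic here because isoclasses in $D^b(\mathcal{J})$ are parametrized combinatorially. This is precisely the paper's argument, in three lines.
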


\bp
We proceed the proof by induction on $d(Y)$. If $d(Y)\leq 1$, then by Lemma \ref{cyclic and Y ind} $\varphi_{X,Y}^L$ exists.
So we assume that $d(Y)>1$, and write $Y=Y_1\oplus Y_2$, where $Y_i\in D^b(A)$ and $d(Y_i)<d(Y)$ for $i=1,2$. Moreover, $l(Y)=0$ implies $\Hom(Y_1,Y_2[1])=0$. Hence, by the associativity for $\{X,Y_1,Y_2;L\}$, we have
\begin{equation}\label{jiehex1}F_{X,Y}^LF_{Y_1,Y_2}^{Y}=\sum\limits_{[W]}F_{X,Y_1}^WF_{W,Y_2}^L.\end{equation}
Note that the generic function $\varphi_{Y_1,Y_2}^{Y}$ already exists by Lemma \ref{generic function for direct sum}, and the sum on the right-hand side is taken over a finite set which is independent of field extensions.
Furthermore, by induction we know that the generic functions $\varphi_{X,Y_1}^W, \varphi_{W,Y_2}^L$ exist for any $W$. This completes the proof.
\ep

Let us give an order on the set $\{(l,d)~|~l\in\mathbb{N},d\in\mathbb{N}^+\}$ defined by
$$(l,d)\leq(l',d')~\Longleftrightarrow~l<l'~\mbox{or}~l=l'~\mbox{and}~ d\leq d'.$$

\textbf{\emph{Proof of the Main Theorem for $Q$ of cyclic type:}}

We will show that the generic function $\varphi_{X,Y}^L$ exists for any $X,Y,L\in D^b(\mathcal {J})$ by induction on $(l(Y),d(Y))$. If $l(Y)=0$ or $d(Y)=1$, then by Lemma \ref{lx0} or Lemma \ref{cyclic and Y ind} respectively, we are done. So we assume that $l(Y)>0$ and $d(Y)>1$, and write $Y=Y_1\oplus Y_2$, where $Y_1,Y_2\in D^b(A)$ are nonzero. Then for $i=1,2$, $l(Y_i)\leq l(Y)$ and $d(Y_i)<d(Y)$, thus
$(l(Y_i),d(Y_i))<(l(Y),d(Y)).$

By the associativity for $\{X,Y_1,Y_2;L\}$, we have
$$\sum\limits_{[W]}F_{X,Y_1}^WF_{W,Y_2}^L=\sum\limits_{[U]}F_{X,U}^LF_{Y_1,Y_2}^U.$$
Hence, \begin{equation}\label{zh}F_{X,Y}^LF_{Y_1,Y_2}^{Y}
=\sum\limits_{[W]}F_{X,Y_1}^WF_{W,Y_2}^L-\sum\limits_{[U]:U\not\cong Y}F_{X,U}^LF_{Y_1,Y_2}^U.\end{equation}
Note that $\varphi_{Y_1,Y_2}^{Y}$ already exists by Lemma \ref{generic function for direct sum}, and the sums on the right-hand side are both taken over finite sets which are independent of field extensions.
Moreover, by Lemma \ref{Ext}, $l(U)<l(Y)$ for each $U$ in the second sum. Thus, by induction, the generic functions exist for all the derived Hall numbers appearing on the right-hand side of (\ref{zh}). We then finish the proof.

\section{The general tame quiver cases}

In this section, we are going to prove the Main Theorem for $Q$ of Dynkin or tame type. {We will show that the generic function $\varphi_{X,Y}^L$ exists by discussing with $X$.} We first consider $X$ as an indecomposable $A$-module, then a decomposable $A$-module and finally an arbitrary object in $D^b(A)$.

\begin{lemma}\label{case of X ind non regular mod}
Let $X$ be an indecomposable preprojective or preinjective $A$-module. Then the generic function $\varphi_{X,Y}^L$ exists for any $Y,L\in D^b(A)$.
\end{lemma}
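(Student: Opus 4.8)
The plan is to imitate the structure already used in the cyclic case: reduce $Y$ to indecomposable summands by repeated use of the associativity formula, and handle the base case ($Y$ indecomposable) by rotating triangles so that the ``middle term'' becomes something we already control. First I would set up the overall induction on $(l(Y),d(Y))$ exactly as in the proof of the Main Theorem for cyclic type. The inductive step for $d(Y)>1$ is \emph{verbatim} the same computation: writing $Y=Y_1\oplus Y_2$ with $d(Y_i)<d(Y)$ and $l(Y_i)\le l(Y)$, the associativity for $\{X,Y_1,Y_2;L\}$ gives, after moving the $U\cong Y$ term of $\sum_{[U]}F_{X,U}^LF_{Y_1,Y_2}^U$ to the left,
\[
F_{X,Y}^LF_{Y_1,Y_2}^{Y}=\sum_{[W]}F_{X,Y_1}^WF_{W,Y_2}^L-\sum_{[U]:U\not\cong Y}F_{X,U}^LF_{Y_1,Y_2}^U,
\]
where $\varphi_{Y_1,Y_2}^{Y}$ exists by Lemma \ref{generic function for direct sum}, all sums are over field-independent finite index sets (by Remark \ref{reduce to compact case} and Lemma \ref{homaut}), and by Lemma \ref{Ext} every $U$ in the second sum has $l(U)<l(Y)$; the $l(Y)=0$ case is then already covered by an analogue of Lemma \ref{lx0}. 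So the whole content is the base case: \textbf{$Y$ indecomposable}.

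For the base case I would use Lemma \ref{xuanzhuan} to rotate. Given a triangle $Y\to L\to X\to Y[1]$ realizing a nonzero $F_{X,Y}^L$, part (1) of Lemma \ref{xuanzhuan} says $F_{X,Y}^L$ and $F_{L,X[-1]}^Y$ differ by the explicit factor $\frac{|\Aut Y|\cdot\{Y,Y\}}{|\Aut L|\cdot\{L,L\}}$, which is generic by Lemmas \ref{homaut} and \ref{dkh}; likewise part (2) relates $F_{X,Y}^L$ to $F_{Y[1],L}^X$. The idea is to trade the problematic middle object $L$ (arbitrary, possibly large) for a middle object that is an indecomposable $A$-module $Y$ or the suspension of one — and crucially, since $X$ is an indecomposable \emph{preprojective or preinjective} module, the relevant $\Hom$- and $\Ext$-vanishing between preprojectives, regulars, preinjectives (listed in Section 2) will force $L$ (hence $X[-1]$, or $Y[1]$) into a very restricted shape. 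Concretely, writing $X$ preprojective and decomposing $L=L_0\oplus L_1[1]\oplus\cdots$ into shifts of modules, the triangle $Y\to L\to X\to Y[1]$ together with $\Hom(\mathcal R,\mathcal P)=\Hom(\mathcal I,\mathcal P)=0$ and $\Ext^1(\mathcal P,\mathcal I)=\Ext^1(\mathcal P,\mathcal R)=0$ should collapse the complex $L$ and the other terms so that, after applying Lemma \ref{xiaoqu} to peel off split-off summands on which the relevant Hom groups vanish, we are reduced either to a Hall number in $\mod A$ (covered by Lemma \ref{Hall}) or to the situation of Lemma \ref{cyclic and Y ind}-type computations inside a tube, where Hall polynomials exist. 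The preprojective/preinjective hypothesis is exactly what makes these vanishing conditions available and is the reason this lemma is separated from the (harder) case of $X$ regular treated later.

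The main obstacle I expect is the bookkeeping in the base case: controlling the shape of $L$ (and of $X[-1]$ or $Y[1]$ after rotation) when $Y$ is indecomposable but can lie in any of $\mathcal P$, $\mathcal R$, $\mathcal I$ (or be a shift thereof). One must check case by case that the triangle forces $L$ to decompose compatibly with the $\mathcal P/\mathcal R/\mathcal I$ filtration — e.g.\ that $X$ preprojective and $Y$ preinjective forces the triangle to be essentially split or to land in a range where \eqref{Hom2} and the genericity of $\Hom$ dimensions (Lemma \ref{homaut}), $\{-,-\}$ (Lemma \ref{dkh}), and the Hall polynomial result (Lemma \ref{Hall}, \cite[Corollary 6.4]{DR}) apply. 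Once the middle term is pinned to a module (or a module plus a shifted module that splits off by Lemma \ref{xiaoqu}), the conclusion follows by combining Lemma \ref{same}/Lemma \ref{Hall} with the genericity of all the correction factors produced by Lemmas \ref{xuanzhuan} and \ref{xiaoqu}.
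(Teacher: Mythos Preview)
Your proposal has a structural gap in the inductive scheme. You induct on $(l(Y),d(Y))$ and, in the step, apply associativity for $\{X,Y_1,Y_2;L\}$ to obtain
\[
F_{X,Y}^LF_{Y_1,Y_2}^{Y}=\sum_{[W]}F_{X,Y_1}^WF_{W,Y_2}^L-\sum_{[U]\neq[Y]}F_{X,U}^LF_{Y_1,Y_2}^U .
\]
In the cyclic-quiver argument this works because the base case (Lemma~\ref{cyclic and Y ind}) is proved for \emph{arbitrary} first argument $X$, so the terms $F_{W,Y_2}^L$ fall under the inductive hypothesis with $W$ playing the role of the new ``$X$''. Here, however, the statement you are proving constrains the first argument to be an indecomposable preprojective or preinjective module; the objects $W$ appearing in $\sum_{[W]}F_{X,Y_1}^WF_{W,Y_2}^L$ have no such restriction, so neither your base case nor your inductive hypothesis covers $\varphi_{W,Y_2}^L$. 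The same problem hits $\varphi_{Y_1,Y_2}^U$ in the second sum. The recursion simply does not close.

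There is a second issue you wave past: you assert that the index sets of the sums are field-independent ``by Remark~\ref{reduce to compact case} and Lemma~\ref{homaut}'', but neither result says this. In the tame (non-cyclic) situation, homogeneous tubes are parametrized by closed points of $\mathbb{P}^1$, so the isoclasses of possible middle terms $W$, $U$ can genuinely vary with the ground field; the paper's Lemma~\ref{diyi} is exactly the device that controls this, and it carries hypotheses on the decompositions of $X_1,X_2$ that are not available for an arbitrary splitting $Y=Y_1\oplus Y_2$.

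The paper's own proof sidesteps all of this with a single move you did not consider: a tilting/APR reflection. Since $X$ is indecomposable preprojective or preinjective, one can choose a tilting object $T$ so that, under the derived equivalence $D^b(A)\simeq D^b(\End T)$, the object $X$ becomes a \emph{simple injective} module. For such $X$, the compactness reduction of Remark~\ref{reduce to compact case} (the condition $\Hom(L_i,X)\neq 0$, together with injectivity of $X$) forces every indecomposable summand of $L$ to sit in degree $0$, and then the triangle forces $Y\in\mod A$ as well; one is reduced immediately to a Hall number in $\mod A$, where Lemma~\ref{Hall} applies. No induction, no rotation, no case analysis on $Y$ is needed.
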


\bp
Without loss of generality, we can assume that $X$ is a simple injective $A$-module. (Indeed, we can choose a tilting object $T$ in $D^b(A)$, such that the endomorphism algebra $\End(T)$ is derived equivalent to $A$ and $X$ viewed as an $\End(T)$-module is simple and injective.)
Then according to Remark \ref{reduce to compact case}, we can assume that both of $Y, L$ belong to the category $\mod A$. Now the result follows from Lemma \ref{Hall}.
\ep

\textbf{\emph{Proof of the Main Theorem for $Q$ of Dynkin type:}}

We prove by induction on $d(X)$, which denotes the number of indecomposable direct summands of $X$. If $d(X)\leq 1$, then by Lemma
\ref{case of X ind non regular mod} we are done.
If $d(X)>1$, then by the directedness of the Auslander-Reiten quiver of $D^b(kQ)$, we can decompose $X$ as $X=X_1\oplus X_2$, where $d(X_i)<d(X)$ for $i=1,2$ and $\Hom(X_1, X_2[1])=0$. Hence, by the associativity for $\{X_1,X_2,Y;L\}$, we have
\begin{equation}\label{jiehex2}F_{X_1,X_2}^{X}F_{X,Y}^L=\sum\limits_{[U]}F_{X_1,U}^LF_{X_2,Y}^U.\end{equation}
Note that $\varphi_{X_1,X_2}^{X}$ already exists by Lemma \ref{generic function for direct sum}, and the sum on the right-hand side is taken over a finite set which is independent of field extensions.
Furthermore, by induction, for each $U$ the generic functions $\varphi_{X_1,U}^L, \varphi_{X_2,Y}^U$ exist. This completes the proof.

Now we turn to prove the Main Theorem for the general tame cases. In what follows, let $Q$ be an acyclic tame quiver.

\begin{proposition}\label{case of X ind mod}
The generic function $\varphi_{X,Y}^L$ exists for $X,Y,L\in D^b(A)$ with $X$ indecomposable.
\end{proposition}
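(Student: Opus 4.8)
The plan is the following. Since every indecomposable object of $D^b(A)$ is a shift of an indecomposable $A$-module and the derived Hall numbers are unchanged under a simultaneous shift $(X,Y,L)\mapsto(X[n],Y[n],L[n])$, I would first reduce to the case where $X$ is an indecomposable $A$-module. If $X$ is preprojective or preinjective the assertion is precisely Lemma \ref{case of X ind non regular mod}, so the real content is the case of a regular indecomposable module $X$, lying in a unique tube $\mathcal{T}$, of rank $m$ say. Recall that for $m\geq 2$ the tube $\mathcal{T}$ is equivalent to the category of nilpotent representations of a cyclic quiver with $m$ vertices over $k$, while for $m=1$ it is a homogeneous tube equivalent to the category of nilpotent representations of the Jordan quiver over the finite field $k_M=\End_A(M)$, $M$ being its quasi-simple object; in either case $D^b(\mathcal{T})$ embeds as a full triangulated subcategory of $D^b(A)$ closed under the formation of cones, so the derived Hall numbers of triples lying in $D^b(\mathcal{T})$ are exactly those treated in Section 4 (after the field-extension bookkeeping of Lemma \ref{homaut}, needed when $m=1$ because $\mathcal{T}$ is defined over $k_M$ rather than $k$).

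Next I would normalize the pair $(Y,L)$. By Remark \ref{reduce to compact case} we may assume $F_{X,Y}^L\neq 0$ and that every nonzero indecomposable summand $L_i$ of $L$ has $\Hom(L_i,X)\neq 0$ and $\Hom(Y,L_i)\neq 0$. As $X$ is a module concentrated in degree $0$, the description (\ref{Hom2}) of $\Hom$-spaces in $D^b(A)$, the orthogonality relations among $\mathcal{P}$, $\mathcal{R}$, $\mathcal{I}$, and the vanishing of $\Hom$ and $\Ext^1$ between modules in distinct tubes together force each indecomposable summand of $Y$ and of $L$ to be, up to a shift by $0$ or $\pm 1$, a module in $\mathcal{P}\cup\mathcal{T}\cup\mathcal{I}$. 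Using Lemma \ref{youyong} and Lemma \ref{xiaoqu} I would then strip off any indecomposable summand of $Y$ or $L$ that does not interact with $X$; the correction factors produced are values of $\{-,-\}$, hence powers of $|E|$ by Lemma \ref{dkh}. This leaves a tightly constrained configuration.

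The heart of the proof is an induction on the quasi-length of $X$ (a lexicographic induction on $(l(Y),d(Y))$ in the manner of Section 4 being a parallel alternative). For the base case $X$ is a regular simple of $\mathcal{T}$; here I would rotate the defining triangle of $F_{X,Y}^L$ via Lemma \ref{xuanzhuan} and use the constraints above to reduce the computation either to a Hall number, handled by Lemma \ref{Hall}, or to a derived Hall number all of whose terms lie in $D^b(\mathcal{T})$, handled by Section 4. For the inductive step, choose a short exact sequence $0\to X_1\to X\to S\to 0$ in $\mathcal{T}$ with $S$ a regular simple and $X_1$ a regular indecomposable of quasi-length one less than that of $X$; by Lemma \ref{same} and the existence of this sequence $F_{S,X_1}^X\neq 0$, and $\varphi_{S,X_1}^X$ exists by Lemma \ref{Hall} since $S,X_1,X\in\mod A$. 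Applying the associativity formula to the quadruple $\{S,X_1,Y;L\}$ and solving for $F_{X,Y}^L$ writes it, up to the nonzero factor $F_{S,X_1}^X$, as a finite integral combination of products of derived Hall numbers indexed by a set independent of the ground field. In each such product I would rotate the relevant triangle (Lemma \ref{xuanzhuan}) so as to trade the unchanged middle term for a configuration that is either a direct sum (Lemma \ref{generic function for direct sum}), a triple of modules (Lemma \ref{Hall}), internal to $D^b(\mathcal{T})$ (Section 4), or one with strictly smaller quasi-length in the first argument, hence covered by the induction hypothesis.

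The main obstacle is precisely the control of these auxiliary terms: applied naively the associativity formula produces terms with a decomposable first argument or with an unchanged middle term, so that no induction parameter visibly decreases. The rotation lemma (Lemma \ref{xuanzhuan}) is what converts such a term into one where some argument has genuinely dropped, and the normalization of the second paragraph is what keeps the rotated terms inside $\mathcal{P}\cup\mathcal{T}\cup\mathcal{I}$ so that Section 4 and Lemma \ref{Hall} remain applicable. I expect that verifying — uniformly in the tube rank $m$ — that every term produced can be brought into one of the four admissible forms above will be the delicate technical part of the argument.
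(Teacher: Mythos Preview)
Your outline diverges from the paper's argument and, as written, has a genuine gap in the inductive step.

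In the paper, once $X$ is regular in a tube $\mathcal{J}$ one does \emph{not} decompose $X$ or induct on its quasi-length. Instead one decomposes $Y$ according to location: $Y=N_{-1}[-1]\oplus N_f\oplus N_0$ with $N_{-1},N_0\in\mathcal{J}$ and $N_f\in\mathcal{I}[-1]\cup\mathcal{P}$, and similarly $L=M_{-1}[-1]\oplus M_f\oplus M_0$. Two nested applications of associativity, first to $\{X,N_{-1}[-1]\oplus N_f,N_0;L\}$ and then to $\{X,N_{-1}[-1],N_f;\,M_{-1}[-1]\oplus M_f\oplus W_0\}$, collapse $F_{X,Y}^L$ to a finite sum of products of two kinds of numbers: derived Hall numbers $F_{X,N_{-1}[-1]}^{U_0\oplus M_{-1}[-1]}$ lying entirely in $D^b(\mathcal{J})$, handled by Section~4, and Hall numbers $F_{U_0,N_f}^{M_f\oplus W_0}$ with all entries in $\mathcal{I}[-1]\cup\mathcal{P}\cup\mathcal{R}$, which is identified with coherent sheaves on a domestic weighted projective line and handled by \cite[Corollary 4.1]{DR}. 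This last external input is essential and is missing from your plan.

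Your induction on quasi-length stalls at exactly the obstacle you flag. Applying associativity to $\{S,X_1,Y;L\}$ and isolating $F_{X,Y}^L$ produces, besides the terms with first argument $S$ or $X_1$, the residual term $F_{S\oplus X_1,Y}^L$ with \emph{decomposable} first argument. Rotation via Lemma~\ref{xuanzhuan} turns this into $F_{L,(S\oplus X_1)[-1]}^{Y}$ or $F_{Y[1],L}^{S\oplus X_1}$, whose first arguments are $L$ or $Y[1]$; these are in general neither indecomposable, nor modules, nor confined to $D^b(\mathcal{T})$, so none of your four admissible forms applies. The same difficulty already arises in your base case: for $X$ a regular simple, $Y$ and $L$ still carry summands from $\mathcal{I}[-1]\cup\mathcal{P}$ after normalization, and no rotation lands you in $\mod A$ or in $D^b(\mathcal{T})$ alone. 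What is missing is precisely a way to control these ``mixed'' terms with entries in both $\mathcal{I}[-1]\cup\mathcal{P}$ and the tube, and this is exactly what the weighted projective line result from \cite{DR} supplies in the paper's argument.
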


\bp
Without loss of generality, we assume that $X\in\mod A$. If $X$ is preprojective or preinjective, then we are done by
Lemma \ref{case of X ind non regular mod}. Now we assume that $X$ is an indecomposable regular $A$-module which lies in a tube $\mathcal {J}$.
According to Remark \ref{reduce to compact case} we assume that $L=L_{-1}[-1]\oplus L_0$ and $Y=Y_{-1}[-1]\oplus Y_0$ with $Y_i, L_i\in\mod A$ for $i=-1,0$. Since different tubes are $\Hom$-orthogonal and $\Ext$-orthogonal, we can further assume that $L=M_{-1}[-1]\oplus M_f\oplus M_0$ and $Y=N_{-1}[-1]\oplus N_f\oplus N_0$, where $M_i, N_i\in\mathcal{J}, i=-1,0$, and $M_f, N_f\in\I[-1]\cup\P$. Since $\Hom(N_{-1}[-1]\oplus N_f,N_0[1])=0$, by Lemma \ref{pinfan} $F_{N_{-1}[-1]\oplus N_f,N_0}^Y=1$. According to the associativity for $\{X,N_{-1}[-1]\oplus N_f,N_0;L\}$, we have
$$F_{X,Y}^L=\sum\limits_{[W]}F_{X,N_{-1}[-1]\oplus N_f}^WF_{W,N_0}^L.$$

For any $W$ on the right-hand side, $F_{X,N_{-1}[-1]\oplus N_f}^W\neq 0$ implies that $W$ has the form $W=W_{-1}[-1]\oplus W_f\oplus W_0$, where $W_{-1}, W_0\in\mathcal{J}$ and $W_f\in\I[-1]\cup\P$; moreover, for any triangle
$$\xymatrix{N_0\ar[r]&M_{-1}[-1]\oplus M_f\oplus M_0\ar[r]&W_{-1}[-1]\oplus W_f\oplus W_0\ar[r]&N_0[1],}$$
we deduce from $\Hom(W_{-1}[-1]\oplus W_f,N_0[1])=0=\Hom(N_0,M_{-1}[-1]\oplus M_f)$
that $W_{-1}=M_{-1}$, $W_f=M_f$, and notice that $\{N_0,M_{-1}[-1]\oplus M_f\}=1$. It follows that
\begin{equation}\label{tjd}F_{X,Y}^L=\sum\limits_{[W_0]:W_0\in\mathcal{J}}F_{X,N_{-1}[-1]\oplus N_f}^{M_{-1}[-1]\oplus M_f\oplus W_0}F_{W_0,N_0}^{M_0}.\end{equation}

Now the sum on the right-hand side is taken over a finite set which is independent of field extensions, and
$F_{W_0,N_0}^{M_0}$ is given by a polynomial. It suffices to show that for each $W_0$ in (\ref{tjd}) the generic function $\varphi_{X,N_{-1}[-1]\oplus N_f}^{M_{-1}[-1]\oplus M_f\oplus W_0}$ exists.
By the associativity for $\{X,N_{-1}[-1],N_f;M_{-1}[-1]\oplus M_f\oplus W_0\}$ , we have
\begin{equation}\label{F3}F_{X,N_{-1}[-1]\oplus N_f}^{M_{-1}[-1]\oplus M_f\oplus W_0}=\sum\limits_{[U]}F_{X,N_{-1}[-1]}^UF_{U,N_f}^{M_{-1}[-1]\oplus M_f\oplus W_0},\end{equation}
where we have used $F_{N_{-1}[-1],N_f}^{N_{-1}[-1]\oplus N_f}=1$.

Note that each $U$ in (\ref{F3}) is determined by the kernel and cokernel of an morphism $f:X\to N_{-1}$, this implies that $U$ belongs to $\mathcal{J}[-1]\cup \mathcal{J}$. So we write $U=U_{-1}[-1]\oplus U_0$ for some $U_{-1}, U_0\in\mathcal{J}$. Moreover, by considering the triangle
$$\xymatrix{N_f\ar[r]&M_{-1}[-1]\oplus M_f\oplus W_0\ar[r]&U_{-1}[-1]\oplus U_0\ar[r]&N_f[1]}$$
and observing that $\Hom(U_{-1}[-1],N_{f}[1])=0=\Hom(N_f,M_{-1}[-1])$, we conclude that $U_{-1}=M_{-1}$.
Since $\{N_f,M_{-1}[-1]\}=1$, we obtain that $F_{U,N_f}^{M_{-1}[-1]\oplus M_f\oplus W_0}=F_{U_0,N_f}^{M_f\oplus W_0}$ and thus \begin{equation}\label{F4}F_{X,N_{-1}[-1]\oplus N_f}^{M_{-1}[-1]\oplus M_f\oplus W_0}=\sum\limits_{[U_0]:U_0\in\mathcal{J}}F_{X,N_{-1}[-1]}^{U_0\oplus M_{-1}[-1]} F_{U_0,N_f}^{M_f\oplus W_0}.
\end{equation}

The sum on the right-hand side is again taken over a finite set which is independent of field extensions. Since all of
$X, N_{-1} , U_0, M_{-1}$ belong to the category $D^b(\mathcal{J})$, we have proved that $\varphi_{X,N_{-1}[-1]}^{U_0\oplus M_{-1}[-1]}$ exists  in Section 4. Moreover, all of $U_0,W_0,M_f,N_f$ belong to the subcategory $\I[-1]\cup \P \cup\R$ of $D^b(A)$, which is equivalent to the category of coherent sheaves over a domestic weighted projective line. By \cite[Corollary 4.1]{DR}, $\varphi_{U_0,N_f}^{M_f\oplus W_0}$ also exists. This completes the proof.
\ep

Now we turn to consider the generic function $\varphi_{X,Y}^L$ for $X,Y,L\in D^b(A)$ with $X$ decomposable. By degree shift we can always assume that $X=\underset{0\leq i\leq n}{\mathop\bigoplus} X_i[i]$ for some $n\geq 0$ with all $X_i\in\mod A$. According to Remark \ref{reduce to compact case} we then assume  $Y=\underset{-1\leq i\leq n}{\mathop\bigoplus }Y_i[i]$ and $L=\underset{-1\leq i\leq n}{\mathop\bigoplus}L_i[i]$ with all $Y_i, L_i\in\mod A$. The following Lemma plays a key role in the subsequent proofs.

\begin{lemma}\label{diyi}
For any $X_1, X_2, Y, L\in D^b(A)$, write $X_j=\underset{0\leq i\leq n}{\mathop\bigoplus} X_{j,i}[i]$ with all $X_{j,i}\in\mod A$ for $j=1,2$. Then the set $$\mathcal{S}:=\{~[U]~|~U\in D^b(A),F_{X_2,Y}^U\neq0, F_{X_1,U}^L\neq0~\}$$ is independent of the conservative field extension $E$ of $k$ relative to $\{X_1, X_2, Y, L\}$, provided one of the following conditions holds:
\begin{itemize}
\item[(i)] $X_1=X_{1,0}\in\mathcal{P}\cup\mathcal{R}$ and $X_{2,0}\in\mathcal{R}\cup\mathcal{I}$;
\item[(ii)] $X_{1,n}\in\mathcal{P}\cup\mathcal{R}$ and $X_2=X_{2,n}[n]\in\mathcal{R}[n]\cup\mathcal{I}[n]$.
\end{itemize}
\end{lemma}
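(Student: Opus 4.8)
The plan is to control the possible objects $U$ appearing in the set $\mathcal{S}$ by combining two kinds of information: the homological vanishing conditions coming from the structure of $\mod A$ for a tame quiver, and the triangle-theoretic constraints that $F_{X_2,Y}^U\neq 0$ and $F_{X_1,U}^L\neq 0$ impose. Concretely, $F_{X_2,Y}^U\neq 0$ means $U$ is the cone of some morphism $Y\to$ (something) — more precisely, there is a triangle $Y\to U\to X_2\to Y[1]$ — so the graded components of $U$ are constrained by those of $X_2$ and $Y$; likewise $F_{X_1,U}^L\neq 0$ forces a triangle $U\to L\to X_1\to U[1]$, constraining $U$ by $L$ and $X_1$. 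First I would pin down, under hypothesis (i), that $X_1=X_{1,0}$ is a stalk complex in degree $0$ lying in $\mathcal{P}\cup\mathcal{R}$, and that the degree-$0$ ``module part'' of $X_2$ lies in $\mathcal{R}\cup\mathcal{I}$; the orthogonality relations $\Hom_A(\I,\P)=\Hom_A(\I,\R)=\Hom_A(\R,\P)=0$ and $\Ext^1_A(\P,\I)=\Ext^1_A(\P,\R)=\Ext^1_A(\R,\I)=0$ then kill many of the potential mixing terms in the long exact sequences associated to the two triangles.

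The key steps, in order, would be: (1) Using Lemma~\ref{homaut}, record that each $\dim_k\Hom(M,N)$ and each $\dim_k\Ext^1_A(M,N)$ among the relevant indecomposables is independent of $k$, and that the finitely many isoclasses of indecomposables involved (outside the homogeneous tubes) are ``the same'' over every conservative extension $E$; this reduces the claim to showing that the finite list of admissible graded dimension vectors (equivalently, admissible $U$) does not change with $E$. (2) From the triangle $Y\to U\to X_2\to Y[1]$, extract via the decomposition \eqref{Hom2}-type bookkeeping that $U_i$ (the degree-$i$ module part of $U$) is built as an extension involving $Y_i$, $Y_{i+1}$ and $X_{2,i}$; the hypothesis that $X_{2,0}\in\R\cup\I$ (resp.\ in case (ii) that $X_2$ is concentrated in top degree $n$ with module part in $\R\cup\I$) together with the $\Ext$-vanishing controls which extensions can be non-split and hence bounds the possible $U_i$ up to finitely many options with field-independent dimension vectors. (3) Symmetrically, from $U\to L\to X_1\to U[1]$ and $X_1\in\P\cup\R$ concentrated in degree $0$, get the complementary constraint. (4) Intersect the two lists of constraints and check that the resulting finite set of admissible isoclasses $[U]$ is described purely by numerical data (dimension vectors, tube combinatorics) that, by the tame classification being field-independent and by \eqref{Hom1}, is literally the same set over $k$ and over any conservative $E$ — in particular, for the regular constituents one uses that nilpotent representations of cyclic quivers are classified by tuples of partitions independently of the ground field, so $(-)^E$ induces a bijection on the relevant isoclasses. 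Case (ii) is handled by the analogous argument after applying a shift and the duality/rotation lemmas (Lemma~\ref{xuanzhuan}), or simply by running the same analysis at the top degree $n$ instead of the bottom degree $0$.

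The main obstacle I expect is step (2)–(3): showing that the triangles genuinely force $U$ into a \emph{finite, field-independent} family. A priori a cone in $D^b(A)$ can have complicated graded pieces, and for a tame quiver the homogeneous tubes contain indecomposables whose endomorphism fields are proper extensions of $k$, so one must be careful that the ``conservative'' hypothesis really does make $(-)^E$ behave functorially on indecomposables and that no new indecomposable summands of $U$ can appear over $E$ that were invisible over $k$. The clean way around this is to note that $F_{X_2,Y}^U\neq 0$ forces, on dimension vectors, $\underline{\dim}\,U_i$ to lie in a bounded region determined by $X_2$ and $Y$ (because Hom/Ext dimensions are additive and non-negative), so there are only finitely many candidate graded dimension vectors, hence finitely many candidate $[U]$ by the tame classification — and this finite list is manifestly the same whether computed over $k$ or over $E$. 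Then one checks separately that the two non-vanishing conditions are detected by equalities/inequalities among the (field-independent) Hom- and Ext-dimensions, invoking \eqref{Hom1}, Lemma~\ref{homaut} and Lemma~\ref{Ext}; the vanishing of the ``wrong-direction'' Hom's supplied by hypothesis (i) or (ii) is exactly what prevents the admissibility of a given dimension vector from depending on $E$.
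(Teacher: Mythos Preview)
Your proposal has a genuine gap at the point you yourself flag as the main obstacle. The ``clean way around'' you offer --- bound the graded dimension vectors of $U$ by those of $X_2$ and $Y$, then invoke the tame classification to get finitely many isoclasses --- does not work for a tame quiver: a fixed dimension vector with nonzero regular part corresponds to infinitely many isoclasses, one for each homogeneous tube, and the set of homogeneous tubes genuinely changes with the ground field (there are roughly $|E|$ of them over $E$). So neither ``finitely many dimension vectors $\Rightarrow$ finitely many $[U]$'' nor ``this finite list is manifestly the same over $k$ and over $E$'' is true in general. Likewise, the non-vanishing conditions $F_{X_2,Y}^U\neq 0$ and $F_{X_1,U}^L\neq 0$ are not detected by Hom/Ext dimensions alone; they record the existence of actual morphisms with a prescribed cone, so you cannot reduce them to numerical inequalities.

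What the paper does instead is argue \emph{summand by summand}. Take an indecomposable direct summand $V$ of $U$ and look at the triangle $X_1[-1]\to U\to L\to X_1$. If $\Hom(X_1[-1],V)=0$, Lemma~\ref{youyong} forces $V$ to already be a direct summand of $L$, so $[V]$ is pinned down by the given data. If $\Hom(X_1[-1],V)\neq 0$, then (since $X_1$ is a module in degree~$0$) $V$ lives in degree $-1$ or $0$; in degree~$0$ Serre duality gives $\Hom(\tau^{-1}V,X_1)\neq 0$, and the hypothesis $X_1\in\P\cup\R$ forces $V\in\P$ or $V$ regular in a tube already containing a summand of $X_1$. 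For $V$ in degree $-1$ one runs the same game with the other triangle $Y\to U\to X_2\to Y[1]$ and the hypothesis $X_{2,0}\in\R\cup\I$. The upshot is that every indecomposable piece of $U$ is either a summand of $L$ or $Y$, or lies in $\P$, $\I$, or one of the finitely many tubes singled out by $X_1$ and $X_{2,0}$; each of these alternatives is insensitive to conservative field extension. This is exactly the mechanism that excludes ``new'' homogeneous regulars over $E$ --- the hypotheses (i) and (ii) are there precisely to make the Serre-duality step land in a controlled tube --- and it is this step that your dimension-vector argument does not supply.
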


\bp
Assume Condition ${\rm(i)}$ holds. For any $[U]\in \mathcal{S}$, we show that the isoclasses $[V]$ of each indecomposable direct summand $V$ of $U$ is independent of the conservative field extensions. For this we first consider the triangle $X_1[-1]\to U\to L\to X_1$. If $\Hom(X_1[-1],V)=0$, then $V$ is a direct summand of $L$, we are done. So we assume that $\Hom(X_1[-1],V)\neq 0$. Then by the assumption $X_1\in\mod A$, we can write $U=U_{-1}[-1]\oplus U_0$ with $U_{-1}, U_0\in\mod A$.\\
\textbf{\emph{Case 1:}} $V$ is a direct summand of $U_0$. Then $\Hom(X_1[-1],V)\cong\D\Hom(\tau^{-1}V,X_1)\neq0$, which implies $V\in\mathcal{P}$, or $V$ is regular and lies in a tube which contains a non-zero summand of $X_{1}$, thus $[V]$ is independent of the conservative field extension $E$ of $k$ relative to $\{X_{1}\}$.\\
\textbf{\emph{Case 2:}} $V$ is a direct summand of $U_{-1}[-1]$. We consider the triangle $Y\to U\to X_2\to Y[1]$.
If $\Hom(V,X_2)=0$, then $V$ is a direct summand of $Y$; if $\Hom(V,X_2)\neq0$, thus $\Hom(X_{2,0},\tau V[1])\neq0$, then $V[1]\in\mathcal{I}$, or $V[1]$ is regular and lies in a tube which contains a non-zero summand of $X_{2,0}$, thus $[V]$ is independent of the conservative field extension $E$ of $k$ relative to $\{X_{2,0}, Y\}$.

Therefore, we finish the proof of the result under Condition ${\rm(i)}$. Dually, we can show the result under Condition ${\rm(ii)}$.
\ep

\begin{lemma}\label{case of X non regular mod}
Let $X$ be an preprojective or preinjective $A$-module. Then the generic function $\varphi_{X,Y}^L$ exists for any $Y,L\in D^b(A)$.
\end{lemma}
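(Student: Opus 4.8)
\noindent\emph{Proof proposal.} The plan is to induct on $d(X)$, the number of indecomposable direct summands of $X$, the base case $d(X)\le 1$ being exactly Lemma \ref{case of X ind non regular mod}. I will spell out the preprojective case; the preinjective one is entirely analogous, with the Hom-orthogonalities dualized. So assume $X$ is preprojective with $d(X)>1$. Since the preprojective component of the Auslander--Reiten quiver of $\mod A$ is directed, I choose an indecomposable direct summand $X_1$ of $X$ that is minimal in the preprojective ordering among all indecomposable summands of $X$, and write $X=X_1\oplus X_2$, so that $d(X_2)=d(X)-1$. Minimality of $X_1$ gives $\Hom_A(X_2,\tau X_1)=0$ (by directedness, with $\tau X_1=0$ when $X_1$ is projective), hence $\Hom(X_1,X_2[1])\cong\Ext^1_A(X_1,X_2)\cong\D\Hom_A(X_2,\tau X_1)=0$. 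Therefore $X_1\oplus X_2$ is the only object $W$ with $F_{X_1,X_2}^{W}\neq 0$, so the associativity for the quadruple $\{X_1,X_2,Y;L\}$ collapses on the left-hand side and yields
$$F_{X_1,X_2}^{X}\cdot F_{X,Y}^{L}=\sum_{[U]}F_{X_1,U}^{L}\cdot F_{X_2,Y}^{U}.$$

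In this identity $\varphi_{X_1,X_2}^{X}$ exists and is a nonzero rational function by Lemmas \ref{pinfan} and \ref{generic function for direct sum}; $\varphi_{X_1,U}^{L}$ exists for every $U$ by Proposition \ref{case of X ind mod}, since $X_1$ is indecomposable; and $\varphi_{X_2,Y}^{U}$ exists for every $U$ by the induction hypothesis, since $X_2$ is preprojective with $d(X_2)<d(X)$. Dividing through by $\varphi_{X_1,X_2}^{X}$ then writes $F_{X^E,Y^E}^{L^E}$ as a rational function of $|E|$, provided the index set
$$\mathcal S:=\{\,[U]\mid F_{X_1,U}^{L}\neq 0,\ F_{X_2,Y}^{U}\neq 0\,\}$$
is finite and independent of the conservative field extension $E$ of $k$ relative to $\{X,Y,L\}$.

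Establishing this control of $\mathcal S$ is the main obstacle: since both $X_1$ and $X_2$ are transjective (i.e.\ preprojective or preinjective) rather than regular or preinjective, Lemma \ref{diyi} does not apply, and I would argue directly. After the standard reduction of Remark \ref{reduce to compact case} (together with Lemmas \ref{xiaoqu} and \ref{dkh}) one may assume $Y$ and $L$ are concentrated in the shifts $[-1]$ and $[0]$; then each $U$ in $\mathcal S$ is likewise so concentrated and has dimension vector bounded in terms of those of $X,Y,L$, so $\mathcal S$ is finite for every $E$. For the field-independence I would analyse an indecomposable direct summand $V$ of such a $U$ through the two defining triangles $U\to L\to X_1\to U[1]$ and $Y\to U\to X_2\to Y[1]$, in the manner of the proof of Lemma \ref{diyi} and using Lemma \ref{youyong} repeatedly: either $V$ is a direct summand of $Y$ or of $L$, or — using $\Hom_A(\mathcal R,\mathcal P)=\Hom_A(\mathcal I,\mathcal P)=0$ — $V$ is transjective (hence rigid and determined by its dimension vector) or regular of bounded dimension vector lying in one of the finitely many tubes occurring in $Y$ or $L$. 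As transjective indecomposables and bounded-dimension-vector indecomposables of a fixed tube form finite families whose isoclasses are unchanged under conservative field extension, $\mathcal S$ is then field-independent; for the regular contributions that the triangles leave ambiguous I would invoke \cite[Corollary 4.1]{DR} applied to the subcategory $\mathcal I[-1]\cup\mathcal P\cup\mathcal R$ of $D^b(A)$ — equivalent to the category of coherent sheaves on a domestic weighted projective line — exactly as at the end of the proof of Proposition \ref{case of X ind mod}. With $\mathcal S$ under control, the displayed identity closes the induction.
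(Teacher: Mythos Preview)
Your induction on $d(X)$ with the splitting $X=X_1\oplus X_2$ runs into a genuine obstruction at the field-independence of $\mathcal S$. When both $X_1$ and $X_2$ are preprojective, the ``middle terms'' $U$ can pick up regular summands from homogeneous tubes that are invisible in $X,Y,L$. Concretely, take the Kronecker quiver and $X_1=X_2=S_2$ (the simple projective), $Y=P_1[-1]$, $L=I_1[-1]$. For any nonzero $h\in\Hom_A(S_2,P_1)\cong k^2$ the cocone of $h$ is $R_\lambda[-1]$ for some $\lambda\in\mathbb P^1(k)$ depending on the line $k\cdot h$, and \emph{every} $\lambda$ arises. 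For each such $\lambda$ one has $F_{S_2,\,P_1[-1]}^{\,R_\lambda[-1]}\neq 0$; and since the nonzero map $S_2\to R_\lambda$ has cokernel $I_1$, also $F_{S_2,\,R_\lambda[-1]}^{\,I_1[-1]}\neq 0$. Hence $\mathcal S$ contains $\{[R_\lambda[-1]]:\lambda\in\mathbb P^1(k)\}$, whose cardinality is $|k|+1$ and changes under every conservative extension (all extensions are conservative here, since $X,Y,L$ are transjective). Your proposed case analysis cannot rule this out: from the triangle $X_1[-1]\to U\to L$ one only gets $\Hom(X_1,V')\neq 0$ for a degree $-1$ regular summand $V'[-1]$, and from $Y\to U\to X_2$ only $\Hom(X_2,\tau V')\neq 0$; neither condition constrains the tube of $V'$. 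Invoking \cite[Corollary 4.1]{DR} does not help either --- that result controls Hall numbers once the index set is fixed, not the index set itself.

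The paper's proof sidesteps this by never splitting $X$ into two preprojective (or two preinjective) pieces. Instead it (i) reduces to $X\in\mathcal I$ by tilting, (ii) decomposes $Y=Y_1\oplus Y_2$ with $Y_1\in\mathcal I[-1]\cup\mathcal P$, $Y_2\in\mathcal R\cup\mathcal I$ and uses associativity on $\{X,Y_1,Y_2;L\}$; the resulting index set is field-independent because the degree-$(-1)$ part is forced to equal $L_{-1}$, and (iii) for the residual case $Y\in\mathcal I[-1]\cup\mathcal P$ it rotates via Lemma \ref{xuanzhuan} to $F_{L,X[-1]}^Y$ and decomposes $L$, so that all remaining ``middle terms'' lie in $\mathcal I[-1]\cup\mathcal P$ and the needed generic functions come from \cite[Corollary 4.1]{DR} applied to the coherent-sheaf category $\mathcal I[-1]\cup\mathcal P\cup\mathcal R$. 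The point is that each associativity step is arranged so that Lemma \ref{diyi}-type arguments actually apply; your $X_1,X_2\in\mathcal P$ splitting is precisely the configuration Lemma \ref{diyi} excludes.
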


\bp
We only need to prove the result for $X$ preinjective, since for $X$ preprojective, we can choose a tilting object $T$ in $D^b(A)$, such that the endomorphism algebra $\End(T)$ is derived equivalent to $A$ and $X$ viewed as an $\End(T)$-module is preinjective.
Now, we assume $X\in\mathcal {I}$. Then by Remark \ref{reduce to compact case}, we also assume that $Y,L\in\mathcal{I}[-1]\cup\mod A$. Writing $Y=Y_1\oplus Y_2$ with $Y_1\in\mathcal{I}[-1]\cup\mathcal{P}$ and $Y_2\in\mathcal{R}\cup\mathcal{I}$, by the associativity for $\{X,Y_1,Y_2;L\}$, we obtain that
\begin{equation}F_{X,Y}^LF_{Y_1,Y_2}^Y=\sum\limits_{[W]}F_{X,Y_1}^WF_{W,Y_2}^L.\end{equation}
From $F_{X,Y_1}^W\neq 0$ we get $W\in\mathcal{I}[-1]\cup\mod A$. Write $W=W_{-1}[-1]\oplus W_0$ and $L=L_{-1}[-1]\oplus L_0$ with $W_{-1},L_{-1}\in\mathcal{I}$ and $W_0,L_0\in\mod A$.
Consider the triangle $Y_2\to L\to W\to Y_2[1]$, since $\Hom(Y_2,L_{-1}[-1])=0=\Hom(W_{-1}[-1],Y_2[1])$, we obtain $W_{-1}=L_{-1}$, and then
\begin{equation}\label{jh}F_{X,Y}^LF_{Y_1,Y_2}^Y=\sum\limits_{[W_0]}F_{X,Y_1}^{L_{-1}[-1]\oplus W_0}F_{W_0,Y_2}^{L_0}.\end{equation}
Moreover, for each indecomposable direct summand $V_0$ of $W_0$, if $\Hom(V_0,Y_2[1])=0$, then $V_0$ is a direct summand of $L$; if $\Hom(V_0,Y_2[1])=\Hom(Y_2,\tau V_0)\neq0$, then $V_0\in\mathcal{I}$ or $V_0$ is regular and belongs to a tube which contains a non-zero summand of $Y_{2}$. Therefore, the sum in (\ref{jh}) is taken over a finite set which is independent of the relative conservative field extensions.
Since the generic functions $\varphi_{Y_1,Y_2}^Y$ and $\varphi_{W_0,Y_2}^{L_0}$ exist for each $[W_0]$ in (\ref{jh}), we only need to work out the case $X\in\mathcal{I}$ and $Y\in\mathcal{I}[-1]\cup\mathcal{P}$.

By Lemmas \ref{xuanzhuan} and \ref{homaut}, it suffices to show $\varphi_{L,X[-1]}^Y$ exists for $X\in\mathcal{I}$, $Y\in\mathcal{I}[-1]\cup\mathcal{P}$ and $L\in\mathcal{I}[-1]\cup\mod A$.

Assume that $L=L_1\oplus L_2$ with $L_1\in\mathcal{I}[-1]\cup\mathcal{P}\cup\mathcal{R}$ and $L_2\in\mathcal{I}$.
By the associativity for $\{L_1,L_2,X[-1];Y\}$, we obtain that
\begin{equation}\label{jh2}F_{L_1,L_2}^LF_{L,X[-1]}^Y=\sum\limits_{[U]}F_{L_1,U}^YF_{L_2,X[-1]}^U.\end{equation}
By the triangle $X[-1]\to U\to L_2\to X$, we get that $U\in\mathcal{I}[-1]\cup\mod A$, furthermore, by the triangle $L_1[-1]\to U\to Y\to L_1$, we conclude that $U\in\mathcal{I}[-1]\cup\mathcal{P}$. It follows that the sum in (\ref{jh2}) is over a finite set which is independent of the relative conservative field extensions. Note that $L_1, Y, U$ belong to the subcategory $\I[-1]\cup\P\cup \R$, which is equivalent to the category of coherent sheaves over a domestic weighted projective line. By \cite[Corollary 4.1]{DR}, the generic function $\varphi_{L_1,U}^{Y}$ exists. Moreover, for each $U$ in (\ref{jh2}), by Lemma \ref{xuanzhuan}, we have
$$F_{L_2,X[-1]}^U=F_{U,L_2[-1]}^{X[-1]}\cdot\frac{|\Aut U|\cdot\{U,U\}}{|\Aut X|\cdot\{X,X\}}.$$
Note that $U, L_2[-1], X[-1]\in\I[-1]\cup \P$, thus $\varphi_{U,L_2[-1]}^{X[-1]}$ and then $\varphi_{L_2,X[-1]}^U$ exists.
Therefore, we complete the proof.
\ep

\begin{proposition}\label{case of X mod}
The generic function $\varphi_{X,Y}^L$ exists for $X\in\mod A$ and $Y,L\in D^b(A)$.
\end{proposition}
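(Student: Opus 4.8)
The plan is to prove the proposition by induction on the pair $(l(X), d(X))$ ordered lexicographically, where (as in Section 4) $d(X)$ denotes the number of indecomposable direct summands of $X$ and $l(X) = \dim_k\Hom(X, X[1]) = \dim_k\Ext^1_A(X,X)$. When $d(X) \le 1$ the assertion is Proposition \ref{case of X ind mod}, and when $X$ is entirely preprojective or entirely preinjective it is Lemma \ref{case of X non regular mod}. So from now on I assume $d(X) \ge 2$ and that $X$ is of neither of these pure types.

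In that situation I would write $X = X_1 \oplus X_2$ with $X_1, X_2$ nonzero $A$-modules and $d(X_i) < d(X)$, putting every preprojective summand of $X$ into $X_1$, every preinjective summand into $X_2$, and distributing the regular summands so that both $X_i$ are nonzero and, whenever $X$ is not regular concentrated in a single tube, so that in addition $\Ext^1_A(X_1, X_2) = 0$. This last requirement can always be arranged using the orthogonality relations recorded in Section 2 together with the $\Hom$- and $\Ext$-orthogonality of distinct tubes: if $X$ has a preprojective summand, put all regular summands into $X_2$; if not but $X$ has a preinjective summand, put all regular summands into $X_1$; if $X$ is regular with summands in at least two tubes, let $X_1$ be the part lying in one of them; and if $X$ is regular inside a single tube $\mathcal{J}$, just take $X_1, X_2$ to be any two nonzero regular subsums. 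In all cases $X_1$ has no preinjective summand and $X_2$ has no preprojective summand, so $X_1, X_2$ satisfy hypothesis (i) of Lemma \ref{diyi}. Applying the associativity of $\mathcal{D}\mathcal{H}(A)$ to the quadruple $\{X_1, X_2, Y; L\}$ and isolating the term $[W] = [X_1 \oplus X_2] = [X]$, I obtain
$$ F_{X_1,X_2}^{X}\, F_{X,Y}^{L} \;=\; \sum_{[U]} F_{X_1,U}^{L}\, F_{X_2,Y}^{U} \;-\; \sum_{[W]\neq [X]} F_{X_1,X_2}^{W}\, F_{W,Y}^{L}. $$
The second sum is empty when $\Ext^1_A(X_1,X_2) = 0$; in the remaining case, where $X$ is regular in a single tube, each $[W] \neq [X]$ occurring there is the middle term of a non-split triangle $X_2 \to W \to X_1 \to X_2[1]$, hence an $A$-module lying in the tube $\mathcal{J}$, and by the last part of Lemma \ref{Ext} it satisfies the strict inequality $l(W) < l(X_1\oplus X_2) = l(X)$.

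It then remains to read off the generic function. First, $\varphi_{X_1,X_2}^{X}$ exists by Lemma \ref{generic function for direct sum} and is a nonzero element of $\mathbb{Q}(T)$, its values $F_{X_1^E,X_2^E}^{X^E}$ being positive (Lemma \ref{pinfan}). Next, the index set $\{[U]\}$ is finite and independent of the conservative field extension by hypothesis (i) of Lemma \ref{diyi}; and the finite set of $[W]\neq[X]$ with $F_{X_1,X_2}^W\neq 0$ is likewise field-independent, since these $W$ are $A$-modules (so $\varphi_{X_1,X_2}^W$ exists and is a polynomial by Lemma \ref{Hall}) and the extension modules of $X_1$ by $X_2$ are defined over $k$ by \eqref{Hom1}. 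Finally, for each such $U$ one has $(l(X_i),d(X_i)) < (l(X),d(X))$ because $d(X_i)<d(X)$ and $l(X_i)\le l(X)$, and for each such $W$ one has $(l(W),d(W)) < (l(X),d(X))$ because $l(W)<l(X)$; hence the generic functions $\varphi_{X_1,U}^{L}$, $\varphi_{X_2,Y}^{U}$, $\varphi_{X_1,X_2}^{W}$ and $\varphi_{W,Y}^{L}$ all exist by the induction hypothesis, and dividing the displayed identity by $\varphi_{X_1,X_2}^{X}$ yields the required $\varphi_{X,Y}^{L} \in \mathbb{Q}(T)$.

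The step I expect to be the main obstacle is the single-tube regular case: there $X$ admits no Ext-orthogonal splitting, the extra $W$-summation cannot be avoided, and it is exactly this that forces the double induction on $(l(X),d(X))$ rather than on $d(X)$ alone — the estimate $l(W) < l(X)$ furnished by Lemma \ref{Ext} being what makes the induction terminate. The companion difficulty, present at every step, is the finiteness and field-independence of the summation ranges, which is precisely what Lemma \ref{diyi} (together with the base-change identity \eqref{Hom1}) is tailored to provide.
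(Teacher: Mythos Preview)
Your proof is correct and follows essentially the same route as the paper's: both argue by induction on $(l(X),d(X))$, decompose $X=X_1\oplus X_2$ with $X_1\in\mathcal{P}\cup\mathcal{R}$ and $X_2\in\mathcal{R}\cup\mathcal{I}$, invoke Lemma \ref{diyi}(i) for the $U$-sum and Lemma \ref{Ext} for the $W$-sum, and use Lemma \ref{generic function for direct sum} for the leading coefficient. Your version is in fact more explicit than the paper's, which simply says ``using the similar method as in the proof of the Main Theorem for cyclic quiver case'' at the inductive step; one small remark is that the field-independence of the $[W]$ in the single-tube case is better justified by the partition parametrization of objects in a tube (recalled in Section 2.1) than by the base-change identity \eqref{Hom1}.
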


\bp
For the case $X\in\mathcal{P}$ or $\mathcal{I}$, we are done by Lemma \ref{case of X non regular mod}. Hence, we assume that $X$ admits a decomposition $X=X_1\oplus X_2$ with $0\neq X_1\in\mathcal {P}\cup\mathcal {R}$ and $0\neq X_2\in\mathcal {R}\cup\mathcal {I}$. Then by the associativity for $\{X_1,X_2,Y;L\}$, we obtain that
$$\sum\limits_{[M]}F_{X_1,X_2}^MF_{M,Y}^L=\sum\limits_{[U]}F_{X_1,U}^LF_{X_2,Y}^U.$$
Hence, \begin{equation}\label{qiuhe1}F_{X_1,X_2}^XF_{X,Y}^L
=\sum\limits_{[U]}F_{X_1,U}^LF_{X_2,Y}^U-\sum\limits_{[M]\neq[X]}F_{X_1,X_2}^MF_{M,Y}^L.\end{equation}
By Lemma \ref{generic function for direct sum} $\varphi_{X_1,X_2}^X$ already exists, and by Lemma \ref{diyi} the first sum in (\ref{qiuhe1}) is independent of the relative conservative field extensions.
Moreover, since $\Ext^1(\mathcal {P},\mathcal {I})=\Ext^1(\mathcal {P},\mathcal {R})=\Ext^1(\mathcal {R},\mathcal {I})=0$ and different tubes are $\Ext$-orthogonal, we deduce from $F_{X_1,X_2}^M\neq 0$ that the second sum in (\ref{qiuhe1}) is also independent of the relative conservative field extensions.
Now using the similar method as in the proof of the Main Theorem for cyclic quiver case, we obtain the existence of the generic function $\varphi_{X,Y}^L$ by induction on $(l(X),d(X))$.
\ep

\textbf{\emph{Proof of the Main Theorem for $Q$ of tame type:}}

We write $X=\underset{0\leq i\leq n}{\mathop\bigoplus} X_i[i]$ with $X_i\in\mod A$ as above and prove by induction on $t(X):=n$.
For the case $t(X)=0$, we are done by Proposition \ref{case of X mod}. Now we suppose $t(X)>0$. We write $X_n=\tilde{X}_n\oplus \hat{X}_n$ with $\tilde{X}_n\in\P$ and $\hat{X}_n\in\R\cup\I$, and set $X^{(2)}=\hat{X}_n[n]$ and $X^{(1)}=X\backslash{X^{(2)}}$.
Then by the associativity for $\{X^{(1)},X^{(2)},Y;L\}$, we obtain that
\begin{equation}\label{jh3}F_{X^{(1)},X^{(2)}}^XF_{X,Y}^L=\sum\limits_{[U]}F_{X^{(1)},U}^LF_{X^{(2)},Y}^U.\end{equation}
By Lemma \ref{generic function for direct sum} $\varphi_{X^{(1)},X^{(2)}}^X$ already exists, and by Lemma \ref{diyi}, the sum on the right-hand side is independent of the relative conservative field extensions. Moreover, for each $[U]$ in (\ref{jh3}), $\varphi_{X^{(2)},Y}^U$ exists by Proposition \ref{case of X mod}.
Hence, it suffices to deal with $F_{X^{(1)},U}^L$. That is, we are reduced to prove the existence of the generic function $\varphi_{X,Y}^L$ for $X=\underset{0\leq i\leq n}{\mathop\bigoplus} X_i[i]$ with all $X_i\in\mod A$ and $X_n\in\P$.

Write $X_0=\tilde{X}_0\oplus \hat{X}_0$ with $\tilde{X}_0\in\P$ and $\hat{X}_0\in\R\cup\I$, and set $X^{(1)}=\tilde{X}_0$ and $X^{(2)}=X\backslash X^{(1)}$. Then the associativity of derived Hall algebras produces (\ref{jh3}) again. By Lemma \ref{diyi} the sum in (\ref{jh3}) is also independent of the relative conservative field extensions. Moreover, we have proved that both of $\varphi_{X^{(1)},X^{(2)}}^X$ and $\varphi_{X^{(1)},U}^L$ exist. Therefore, we are reduced to prove the existence of the generic function $\varphi_{X,Y}^L$ for $X=\underset{0\leq i\leq n}{\mathop\bigoplus} X_i[i]$ with all $X_i\in\mod A$, $X_0\in\R\cup\I$ and $X_n\in\P$. For this, we can choose a tilting object $T$ in $D^b(A)$ with endomorphism algebra $A'$, such that under the derived equivalence $D^b(A)\cong D^b(A')$, $X$ belongs to the subcategory $\cup_{0\leq i\leq n-1}\mod A'[i]$. Thus, by induction on $t(X)$, we conclude that the generic function $\varphi_{X,Y}^L$ exists. Therefore, we have finished the proof of Main Theorem.
\fin

\section*{Acknowledgments}

The authors are grateful to Professor Bangming Deng for his helpful discussions and comments.


\begin{thebibliography}{99}

\bibitem{BD} K. Bongartz and D. Dudek, Decomposition classes for representations of tame quivers,
J. Algebra {\bf 240} (2001), 268--288.

\bibitem{CR} P. Caldero and M. Reineke, On the quiver Grassmannian in the acyclic case,
J. Pure Appl. Algebra {\bf 212}(11) (2008), 2369--2380.

\bibitem{DR} B. Deng and S. Ruan, {Hall Polynomials for tame type}, J. Algebra (2017), http://dx.doi.org/10.1016/j.jalgebra.2016.10.002.

\bibitem{Dlab} V. Dlab and C. M. Ringel, Indecomposable Representations of Graphs and Algebras, Memoirs
Amer. Math. Soc., no. 173, Amer. Math. Soc., Providence, 1976.

\bibitem{Fu} C. Fu, {Hall polynomials for representation-finite repetitive cluster-tilted algebras},
{Algebr. Represent. Theory} {\bf 17} (2014), 1137--1143.

\bibitem{Gr95} J. A. Green, {Hall algebras, hereditary algebras and quantum groups}, Invent.
Math. {\bf 120} (1995), 361--377.

\bibitem{Guo} J. Guo, {The Hall polynomials of a cyclic serial algebra}, {Comm. Algebra} {\bf 23}(2) (1995), 743--751.

\bibitem{GuoP} J. Guo and L. Peng, {Universal PBW-Basis of Hall--Ringel
algebras and Hall polynomials}, J. Algebra {\bf
198} (1997), 339--351.

\bibitem{Hubery} A. Hubery, {From triangulated categories to Lie Algebras: A
theorem of Peng and Xiao}, Trends in Representation Theory of Algebras and Related Topics, Contemp. Math. {\bf 406}, Amer. Math. Soc., Providence, 2006, 51--66.

\bibitem{Hub} A. Hubery, {Hall polynomials for affine quivers}, Representation Theory
{\bf 14}(10) (2010), 355--378.

\bibitem{Kapranov} M. Kapranov, {Heisenberg Doubles and Derived Categories}, J. Algebra {\bf 202} (1998), 712--744.

\bibitem{Lam} T. Y. Lam, {A first course in noncommutative rings}, Graduate Texts in Mathematics {\bf 131}, Springer-Verlag, New York, 1991.

\bibitem{Macdonald} I.~G. Macdonald, {Symmetric functions and Hall polynomials}, 2nd. edition, Oxford Math. Monographs
(The Clarendon Press, Oxford Univ. Press, New York, 1995).

\bibitem{Nasr}  A.~R. Nasr-Isfahani, {Hall polynomials and composition algebra of representation finite algebras}, {Algebr. Represent. Theory} {\bf 17} (2014), 1155--1161.

\bibitem{Peng} L. Peng, {Some Hall polynomials for representation-finite trivial extension algebras}, J. Algebra {\bf 197} (1997), 1--13.

\bibitem{Xiao2} L. Peng and J. Xiao, {Root categories and simple Lie algebras}, J. Algebra {\bf
198}(1) (1997), 19--56.

\bibitem{PX2000} L. Peng and J. Xiao, {Triangulated categories and Kac--Moody algebras}, Invent. Math. {\bf 140} (2000), 563--603.

\bibitem{R90} C.~M. Ringel, {Hall algebras}, In: S. Balcerzyk, et al. (Eds.), Topics in Algebra, Part 1, in: Banach Center Publ. {\bf 26} (1990), 433--447.

\bibitem{R90a} C.~M. Ringel, {Hall algebras and quantum groups},
Invent. Math. {\bf 101} (1990), 583--592.

\bibitem{R91b} C.~M. Ringel, {Lie algebras arising in representation theory}, In: ¡°Representations of
Algebras and Related Topics¡±, London Math. Soc. LNS{\bf 168} (1992), 284--291, Cambridge
Univ. Press.

\bibitem{R93} C.~M. Ringel, {The composition algebra of a cyclic quiver, towards an explicit
description of the quantum group of type $\tilde A_n$}, Proc. London Math. Soc. {\bf 66}(3) (1993), 507--537.

\bibitem{Sch} O. Schiffmann, Lectures on Hall algebras,
Geometric methods in representation theory II,  1--141, S$\acute{\rm e}$min. Congr., 24-II,
Soc. Math. France, Paris, 2012.

\bibitem{Van} B. Sevenhant and M. Ven den Bergh, On the double of the Hall algebra of a
quiver, J. Algebra {\bf221} (1999), 135--160.

\bibitem{Szato} C. Sz\'{a}nt\'{o}, {Hall numbers and the composition algebra of the
Kronecker algebra}, Algebr. Represent. Theory {\bf 9} (2006), 465--495.


\bibitem{Toen} B. T\"oen, Derived Hall algebras, Duke Math.
J. {\bf 135} (2006), 587--615.

\bibitem{XiaoXu} J. Xiao and F. Xu, {Hall algebras associated to triangulated categories}, Duke Math. J. {\bf 143}(2) (2008), 357--373.

\bibitem{Zhang} P. Zhang, {PBW-basis for the composition algebra of the Kronecker algebra},
J. Reine Angew. Math. {\bf 527} (2000), 97--116.

\end{thebibliography}
\end{document}